\pgfplotsset{width=10cm,compat=1.9}
\numberwithin{equation}{section}
\newtheorem{theorem}{Theorem}[section]
\newtheorem{remark}[theorem]{Remark}
\newtheorem{lemma}[theorem]{Lemma}
\newcommand {\real} {\,\hbox{{\rm R \kern-1.2em I \kern.3em }}}
\def \bV{{\mathbf V}}
\def \bX{{\mathbf X}}
\def \bL{{\mathbf L}}
\def \bI{{\mathbf I}}
\def \bv{{\mathbf v}}
\def \b0{{\mathbf 0}}
\def \bu{{\mathbf u}}
\def \bw{{\mathbf w}}
\def \bbf{{\mathbf f}}
\def \bH{{\mathbf H}}
\def \by{{\mathbf y}}
\def \bn{{\mathbf n}}
\def \bb{{\mathbf b}}
\def \bg{{\mathbf g}}
\def \bz{{\mathbf z}}
\def \bv{{\mathbf v}}
\def \bf{{\mathbf f}}
\def \bsigma{\mbox{\boldmath $\sigma$}}
\def \bSigma{\mbox{\boldmath $\Sigma$}}
\def \bLambda{\mbox{\boldmath $\Lambda$}}
\def \beeta{\mbox{\boldmath $\eta$}}
\def \bxi{\mbox{\boldmath $\xi$}}
\def \nn{\nonumber}
\def\el {\nonumber }
\def \Omegn1{\Omega^f_{t_{n+1}}}
\def \beq{\begin{equation}}
\def \eeq{\end{equation}}
\def \beqn{\begin{eqnarray}}
\def \eeqn{\end{eqnarray}}
\title{Decoupling methods for fluid-structure interaction with local time-stepping}
\author[1]{Hemanta Kunwar
\thanks{ Partially supported by the NSF under grant number DMS-2207971. email: hkunwar@g.clemson.edu}}
\author[2]{Hyesuk Lee
  \thanks{ Partially supported by the NSF under grant number DMS-2207971. email: hklee@clemson.edu}}
\affil[1,2]{School of Mathematical and Statistical Sciences, Clemson University, Clemson, SC 29634-0975 }
\date{}
\begin{document}

\maketitle 
\begin{abstract}
 We introduce two global-in-time domain decomposition methods, namely the Steklov-Poincare method and the Robin method, for solving a fluid-structure interaction system. These methods allow us to formulate the coupled system as a space-time interface problem and apply iterative algorithms directly to the evolutionary problem. Each time-dependent subdomain problem is solved independently, which enables the use of different time discretization schemes and time step sizes in the subsystems. This leads to an efficient way of simulating time-dependent phenomena. We present numerical tests for both non-physical and physical problems, with various mesh sizes and time step sizes to demonstrate the accuracy and efficiency of the proposed methods. 

\end{abstract}

\setcounter{equation}{0}
\setcounter{figure}{0}

\section{Introduction}\label{sec: intro}
The Fluid-Structure Interaction (FSI) problems are multiphysics problems where the fluid flow and an elastic structure are coupled through the continuity of traction force and velocity on the interface. FSI systems have a wide range of applications in various fields, including manufacturing, energy, aeroelasticity, defense, and biology \cite{R1, R2, R3, R4, R5, R6, R7, R9, R8}. In engineering, FSI systems are considered in designing inkjet printers, blades for wind turbines, airplane wings, combustion chambers in engines, and offshore oil rigs. In biology, such systems are often considered to study blood flow through vessels.

The FSI system is considered as a coupled monolithic system in \cite{R11, R12, R13, mtg, mng}. In such an approach, the computational complexity arises from solving a large matrix system, necessitating the use of an efficient and suitable preconditioner for the discretized system \cite{mng}. An alternative approach involves decoupling the fluid and structure subsystems \cite{bfpt, BRM17, bdg, B, A, BF09, Fer11, FGG05, Klsee, R14, vad, R10}. Implementing such methods, despite their advantages of using partitioned solvers and smaller matrices for each subsystem, can pose challenges in achieving efficient iteration between the two subsystems.

There have been extensive studies on domain decomposition (DD) techniques for FSI in the literature. Various approaches have been considered, including explicit schemes \cite{BF09, Fer11} and semi-implicit schemes \cite{BRM17, FGG05, NBR21}. 
Many implicit DD methods have also been investigated for better stability of the numerical solution. For example, an implicit DD method based on optimization is considered in \cite{R10}, for both linear and nonlinear elastic formulations for the structure. There, the stress force on the interface is used as a Neumann control, which is updated until the stress discontinuity on the interface is sufficiently small, by enforcing the continuity of velocity through a Dirichlet boundary condition for the fluid subsystem. This process requires solving the subsystems in serial. Another optimization approach for FSI is explored in \cite{R14} by formulating the FSI problem as a least squares problem, where the jump in the velocities of the two substructures is minimized by a Neumann control enforcing the continuity of stress on the interface. In \cite{JMP}, the hybridizable discontinuous Galerkin (HDG) finite element method is used in the simulation of FSI. The coupling between an underlying incompressible fluid and an embedded solid is formulated through the overlapping domain decomposition method in conjunction with a mortar approach in \cite{hab}. A fictitious domain approach, where the fluid velocity and pressure are extended into the solid domain by introducing new unknowns, has been applied to study FSI in \cite{bfpt, bdg, vad}. In \cite{A} a splitting scheme based on Robin conditions is analyzed with an additional variable representing the structure velocity, where a common Robin parameter is utilized for both fluid and structure sub-problems. This approach uses common time steps for the fluid and the structure sub-problems, and the loosely coupled subproblems are solved at each time step. 

In \cite{B}, the finite element approximation of the DD formulation introduced in \cite{A} is analyzed, and an error estimate is derived for the fully discretized system. 
Loosely coupled schemes based on interface conditions of Robin type are also found in \cite{gig} for the time-discretized FSI system, where the choice of optimal Robin parameters is analyzed. 

In classical DD approaches for time-dependent problems,  model equations are discretized uniformly in time, and DD methods are implemented at each time step as a steady-state problem. However, using a uniform time step throughout the entire time domain can be inefficient in certain FSI applications where the time scales of the fluid and structure domains differ. In recent studies, alternative approaches based on global-in-time or space-time domain decomposition (DD) have been employed where iterative algorithms are directly applied to the evolutionary problem. This enables the independent solving of subdomain problems with the different time discretization schemes and step sizes, resulting in an efficient simulation of time-dependent phenomena.

The space-time DD approach has been extensively investigated for porous medium flows (see \cite{HJJ13, HJK16,  Yotov23} and the references therein) and recently studied for the Stokes-Darcy systems \cite{THH, THe}. In \cite{THe}, a global-in-time DD method is developed based on the physical transmission conditions for the nonlinear Stokes-Darcy coupling. A time-dependent Steklov-Poincare type operator is constructed, and non-matching time grids are implemented using $L^2$ projection functions to exchange data on the space-time interface between different time grids.  Another global-in-time DD method is proposed in \cite{THH} for the mixed formulation of the non-stationary Stokes-Darcy system based on Robin transmission conditions.

This work aims to study the global-in-time DD methods introduced in \cite{THH, THe} for an FSI system using nonconforming time discretization. We consider two different DD schemes, based on the Steklov-Poincar\'{e} operator and Robin transmission conditions, respectively.  
To our knowledge, the global-in-time DD scheme has not been considered for an FSI system in the literature. Apart from the advantage of using local time stepping, another key point of this work is that we were able to simulate hemodynamic application problem using the Steklov-Poincar\'{e} method without encountering the stability issue. In the analysis of the DD method for the FSI system, additional difficulty arises due to the model equations of hyperbolic and parabolic types, unlike the Stokes-Darcy system. Another issue is caused by the lack of regularity of the unknown Lagrange multiplier function. 

The paper is structured as follows. Section~\ref{sec: model} introduces the FSI model system. In Section~\ref{sec:SWR}, we derive space-time interface problems for the continuous FSI model, and present the Schwarz waveform relaxation (SWR) algorithm and its convergence analysis. The semi-discrete system, the discrete SWR algorithm, and its convergence analysis are discussed in Section~\ref{sec:semidiscrete}. Section~\ref{sec:numericalresults} presents the results of numerical tests conducted on two examples. Following that, Section~\ref{sec:conclusion} provides the conclusion.
%%%%%%%%%%%%%%%%%%%%%%%%%%%%%%

\section{Model equations}\label{sec: model}
The FSI problem involves coupling an incompressible Newtonian fluid with a linear
elastic structure. To simplify the problem and conduct a rigorous analysis, we 
assume that the fluid is governed by the linear Stokes equations in a fixed domain. 
However, the proposed DD schemes can be extended to a nonlinear FSI system, as demonstrated in \cite{THe}. See Remark \ref{remark:nonlinear} for more details.

Suppose the domain under consideration comprises two bounded regions $\Omega_f, \Omega_s \in \real^d, d = 2,3$, separated by the common interface $\Gamma$. See Figure ~\ref{fig:domain}.
The free fluid occupies the first region $\Omega_f$ and has boundary $\partial \Gamma_f=\Gamma_f \cup \Gamma$. A saturated elastic structure occupies the second region $\Omega_s$ with the boundary $\partial \Gamma_s = \Gamma_s \cup \Gamma$.

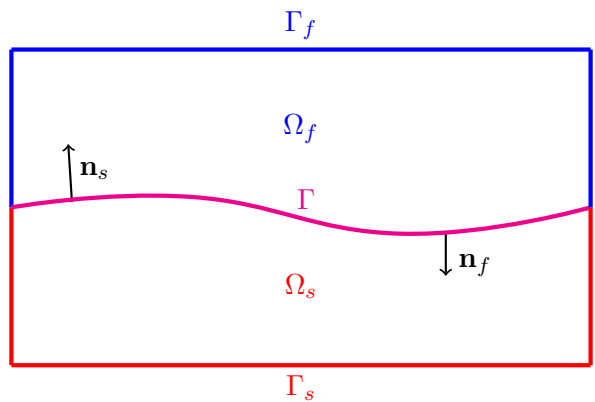
\begin{figure}[H]
\begin{center}
\begin{tikzpicture}[scale=0.7]
\draw[ultra thick, -, color=red] (0,0)--(11,0);
\draw[ultra thick, -, color=blue] (0,6)--(0,3);
\draw[ultra thick, -, color=red] (0,3)--(0,0);

\draw[thick, ->] (1.15,3.1)--(1.08,4.2);
\node[right] at (1.1,3.7) {$ \bn_s $};

\draw[thick, ->] (8.25,2.55)--(8.25,1.7);
\node[right] at (8.3,1.9) {$ \bn_f $};

\draw[ultra thick, -, color=blue] (0,6)--(11,6);
 \draw[ultra thick, -, color=magenta] plot [smooth,tension=0.8] coordinates{(0,3) (3.4,3.2) (7.4,2.5) (11,3)};
\draw[ultra thick, -, color=red] (11,0)--(11,3);
\draw[ultra thick, -, color=blue] (11,3)--(11,6);
%\node[left] at (0,1.5) {$ \textcolor{red}{\Gamma_s}  $};
%\node[left] at (0,4.5) {$ \textcolor{blue}{\Gamma_f} $};

%\node[right] at (11,1.5) {$ \textcolor{red}{\Gamma_s} $};
%\node[right] at (11,4.5) {$ \textcolor{blue}{\Gamma_f} $};

\node[below] at (5.5,0) {$ \textcolor{red}{\Gamma_s} $};
\node[above] at (5.5,6) {$ \textcolor{blue}{\Gamma_f} $};

\node[] at (5.5,4.5) {$ \textcolor{blue}{\Omega_f} $};
\node[] at (5.5,1.5) {$ \textcolor{red}{\Omega_s} $};

\node[] at (5.6,3.15) {$ \textcolor{magenta}{\Gamma} $};
\end{tikzpicture}
\caption{Two-dimensional domain formed by FSI system}
\label{fig:domain}
\end{center} \vspace{-0.4cm}
\end{figure}
\noindent
Consider the fluid equations:
\begin{eqnarray}\label{stokesequn}
\rho_f  \partial_t \bu -2\nu_{f} \, \nabla \cdot D(\bu) + \nabla
p &=& \bbf_f  \quad \mbox{in } \Omega_f \times (0, T) \,, \label{moment}
\\
\nabla \cdot \; \bu &=& 0  \quad \mbox{in } \Omega_f \times (0, T) \,, \label{mass}\\
\bu &=& \textbf{0} \quad \mbox{in } \Gamma_f \times (0, T) \,, \label{fluidbd} \\
\bu(., 0) &=&\bu^{0}   \quad \mbox{in } \Omega_f, \label{fluidin}
\end{eqnarray}
where $\bu$ denotes the velocity vector of the fluid, $p$ the pressure of the fluid, $\rho_f$ the density of the fluid, $\nu_f$ the fluid viscosity, and $\bbf_f$ the body force acting on the fluid. 
%\textcolor{blue}{Furthermore, we used the notation $\partial_t = \frac{\partial}{\partial t}$}.
Here, $D(\bu)$ is the strain rate tensor
\begin{equation*}
D(\bu) = \frac{1}{2}\left( \nabla \bu + (\nabla \bu)^T \right )
\end{equation*}
and the Cauchy stress tensor is given by
\begin{align}
\bsigma_f =  2\nu_{f}\, D(\bu)  - p \bI. \el
\end{align}
The equation (\ref{moment}) represents the conservation of linear momentum, while equation (\ref{mass}) represents the conservation of mass.
The elastic system is represented by:
\begin{eqnarray}\label{biotequn}
\rho_s \partial_t^2 \beeta -2 \nu_s \, \nabla \cdot D(\beeta)  - \lambda \nabla (\nabla \cdot \beeta) &=& \bbf_s \quad \mbox{in } \Omega_s \times (0, T) \,, \label{structure}
 \\
 \beeta &=& \textbf{0} \quad \mbox{in } \Gamma_s \times (0, T) \,, \label{porobd}
 \\
\beeta(.,0) &=& \beeta^{0}  \quad \mbox{in } \Omega_s \,,\label{poroin1}
\\
\partial_t\beeta(.,0) &=& \bar{\beeta}^{0} \quad \mbox{in } \Omega_s \,,
\label{poroin}
\end{eqnarray}
where $\beeta$ is the displacement of the structure and $\bbf_s$ is the body force.
The total stress tensor for the elastic structure is given by
\begin{align*}
\bsigma_s = 2 \nu_s D(\beeta) + \lambda (\nabla \cdot \beeta )\bI,
\end{align*}
where $\nu_s$ and $\lambda$ denote the Lam\'e constants. 
The density of the elastic structure is denoted by $\rho_s$. 

The fluid and elastic models, (\ref{moment})-(\ref{fluidin}) and (\ref{structure})-(\ref{poroin}), are coupled via the following interface conditions:
\begin{eqnarray}
\bsigma_f  \bn_f & =  &- \bsigma_s  \bn_s    \quad \mbox{on } \Gamma \times (0,T)
 \,, \label{inter1} \\
 \partial_t \beeta &=& \bu \quad \mbox{on } \Gamma \times (0,T)
 \,, \label{inter2}
\end{eqnarray}
where $\bn_f$ and $\bn_s$ denote outward unit normal vectors to $\Omega_f$ and $\Omega_s$, respectively.
 These interface conditions suffice to precisely couple the Stokes system \eqref{moment}-\eqref{fluidin} to the structure system (\ref{structure})-(\ref{poroin}), imposing the balance of normal stresses and  the continuity of velocity. 

To establish a weak formulation of the problem, we adopt standard notation for Sobolev spaces and their associated norms and seminorms. For $S \subset \real^d$, the norm for the Hilbert space $H^m(S)$ is denoted by  $\|\cdot \|_{m,S}$. For $m=0$, $( \cdot, \cdot)_{S}$ and $\|\cdot \|_{S}$ denote the inner product and the norm in $L^2(S)$ respectively. Moreover, if $S = \Omega_f$ or $\Omega_s$, and the context is clear, $S$ will be omitted, i.e., $(\cdot,\cdot) = (\cdot,\cdot)_{\Omega_f}$
or $(\cdot,\cdot)_{\Omega_s}$ for functions defined in $\Omega_f$ and
$\Omega_s$. For $F \subset \real^{d-1}$ such that $F \subset \partial \Omega_{f} \cap \partial
\Omega_{s}$, we use $\langle \cdot , \cdot \rangle_{F}$ to denote
the duality pairing between $H^{-1/2}(F)$ and $H^{1/2}(F)$.

%Finally, the associated space of vector-valued functions will be denoted by a boldface font.  
%

Define the function spaces for the fluid velocity $\bu$, the fluid pressure $p$, and the displacement $\beeta$  as
\begin{align*}
& \textbf{X} :=  \{\bv \in \bH^{1}(\Omega_f):\; \bv = {\textbf{0}} \ \ \mbox{  on } \Gamma_f  \}, \\
&Q := L^2(\Omega_f), \\
& \bSigma :=   \{\bxi \in \bH^{1}(\Omega_s): \, \bxi =\b0  \ \ \mbox{  on } \Gamma_s  \}.
\end{align*}
The spaces $\bX$ and $Q$ satisfy the inf-sup condition,
\begin{equation}\label{infsup}
\inf_{q \in Q} \sup_{\bv \in \textbf{X}}\frac{(q, \nabla \cdot \bv)}{\|q\|_{\Omega_f}\|\nabla \bv\|_{\Omega_f}} \geq \beta >0.
\end{equation}
We also define the {\it div-free} space for the fluid velocity, 
$$
\textbf{V}:=\{ \bv \in \textbf{X}: (q, \nabla \cdot \bv)=0, \forall q \in Q\} \,.
$$
Note that, for $\bv \in \bV$, $\|\nabla \cdot \bv\|_{\Omega_f} = 0$ and, therefore,  
$\nabla \cdot \bv =0$ if $\nabla \cdot \bv \in C^0(\Omega)$. 
The dual spaces $\textbf{X}^{*}$ and $\textbf{V}^{*}$ are endowed with the following dual norms
 $$\|\bw\|_{\textbf{X}^{*}}:=\sup_{\bv \in \textbf{X}}\frac{(\bw,\bv)}{\|\nabla \bv\|_{\Omega_f}}, \quad \|\bw\|_{\textbf{V}^{*}}
 :=\sup_{v \in \textbf{V}}\frac{(\bw,\bv)}{\|\nabla \bv\|_{\Omega_f}} \,.$$
 
  In the following Lemma, we show the equivalence of these norms for functions in $\bV$
with continuous divergence.
  \begin{lemma}\label{VXdualrelation}
Let $\bw \in \bV$ and suppose   $\nabla \cdot \bw \in  C^0(\Omega_f)$. 
Then, there exists $C_*>0$ such that 
$$\|\bw\|_{\bV^{*}} \le \|\bw\|_{\bX^{*}} \le C_*\|\bw\|_{\bV^{*}}.$$
 \end{lemma}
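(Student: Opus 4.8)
The statement splits into two inequalities of very different character, and the plan is to dispatch the left one for free and concentrate all effort on the right one. For the left inequality $\|\bw\|_{\bV^{*}}\le\|\bw\|_{\bX^{*}}$ I would simply note that $\bV\subset\bX$, so the supremum defining $\|\bw\|_{\bV^{*}}$ runs over a subcollection of the test fields admissible for $\|\bw\|_{\bX^{*}}$ and hence cannot exceed it. Before attacking the reverse bound I would record the one place the hypotheses are genuinely used: since $\bw\in\bV$ gives $\|\nabla\cdot\bw\|_{\Omega_f}=0$ and $\nabla\cdot\bw\in C^0(\Omega_f)$, the observation made just before the lemma upgrades this to the pointwise identity $\nabla\cdot\bw=0$ on $\Omega_f$, which is what will license an integration by parts later.

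For the reverse inequality I would estimate $(\bw,\bv)$ for an arbitrary $\bv\in\bX$ and take the supremum at the end. The inf-sup condition (\ref{infsup}) furnishes a bounded right inverse of the divergence, namely a $\bz\in\bX$ with $\nabla\cdot\bz=\nabla\cdot\bv$ and $\|\nabla\bz\|_{\Omega_f}\le C\|\nabla\bv\|_{\Omega_f}$, where $C$ depends only on $\beta$. Then $\bv_0:=\bv-\bz\in\bV$ with $\|\nabla\bv_0\|_{\Omega_f}\le(1+C)\|\nabla\bv\|_{\Omega_f}$, and I would split
\[(\bw,\bv)=(\bw,\bv_0)+(\bw,\bz).\]
The solenoidal part is immediate, $(\bw,\bv_0)\le\|\bw\|_{\bV^{*}}\|\nabla\bv_0\|_{\Omega_f}\le(1+C)\|\bw\|_{\bV^{*}}\|\nabla\bv\|_{\Omega_f}$, so everything reduces to controlling the cross term $(\bw,\bz)$ by a constant times $\|\bw\|_{\bV^{*}}\|\nabla\bv\|_{\Omega_f}$.

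Bounding $(\bw,\bz)$ is the heart of the matter and the step I expect to be the main obstacle, because it is precisely here that $\nabla\cdot\bw=0$ must enter: for a generic non-solenoidal $\bw$ the reverse inequality is simply false (a pure gradient field has vanishing $\bV^{*}$ norm but nonzero $\bX^{*}$ norm), so any valid argument must exploit solenoidality of $\bw$. The mechanism I would use is to transfer the divergence of $\bz$ onto $\bw$ by integration by parts: the resulting volume integral against $\nabla\cdot\bw$ vanishes by the pointwise identity above, and the boundary contribution over $\Gamma_f$ vanishes because $\bw=\b0$ there, leaving only an interface flux term on the coupling boundary $\Gamma$, where $\bw$ carries a nonzero normal trace $\bw\cdot\bn_f$. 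Making this rigorous is delicate for two reasons: the right inverse $\bz$ is not literally a gradient, so one must first isolate its divergence-carrying content (for instance through an auxiliary Stokes or potential problem) in order to integrate by parts at all; and the surviving interface term $\langle\bw\cdot\bn_f,\cdot\rangle_{\Gamma}$ must still be dominated by $\|\bw\|_{\bV^{*}}$, which is subtle given the limited regularity of the normal trace of $\bw$. I would expect this last estimate to be where the bulk of the technical work lies. Once $(\bw,\bz)$ is controlled, combining the two terms and taking the supremum over $\bv\in\bX$ produces $\|\bw\|_{\bX^{*}}\le C_*\|\bw\|_{\bV^{*}}$ with $C_*$ depending only on $\beta$ and the geometry.
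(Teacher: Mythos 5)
Your handling of the first inequality and your diagnosis of where solenoidality must enter are both correct, and the decomposition $\bv=\bv_0+\bz$ via the inf-sup condition is a legitimate starting point. But the argument has a genuine gap at precisely the step you yourself flag as ``the heart of the matter'': the cross term $(\bw,\bz)$ is never actually bounded. Integrating by parts (using $\nabla\cdot\bw=0$ in $\Omega_f$ and $\bw=\b0$ on $\Gamma_f$) leaves an interface pairing of $\bw\cdot\bn_f$ against a trace on $\Gamma$, and you offer no argument that this pairing is dominated by $\|\bw\|_{\bV^{*}}$; you explicitly defer it as ``the bulk of the technical work.'' Since a right inverse of the divergence is only determined up to an element of $\bV$, an arbitrary choice of $\bz$ gives you no structural reason for that interface term to be controllable, so as written the proof reduces the lemma to an unproved and, if anything, harder-looking estimate.

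The paper closes exactly this gap by choosing the decomposition more carefully: it writes $\bv=\bv_1+\nabla q$ with $\nabla\cdot\bv_1=0$ and $q$ taken in a specific subspace of $H^2(\Omega_f)$ (Girault--Raviart, Sections 3.1 and 3.3) for which the decomposition is $L^2$-orthogonal against every divergence-free field in $\bX$, so that $(\bw,\nabla q)=0$ holds identically -- the boundary condition built into the choice of $q$ kills the interface term rather than an estimate. The only remaining work is the elliptic bound $\|\nabla(\nabla q)\|_{\Omega_f}\le C\|\Delta q\|_{\Omega_f}=C\|\nabla\cdot\bv\|_{\Omega_f}\le C\|\nabla\bv\|_{\Omega_f}$, which yields $\|\nabla\bv_1\|_{\Omega_f}\le C_*\|\nabla\bv\|_{\Omega_f}$ and hence the reverse inequality by passing the supremum from $\bX$ to $\bV$. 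To repair your route you should replace the generic inf-sup right inverse $\bz$ by the gradient part $\nabla q$ of this orthogonal Helmholtz decomposition; with that substitution your cross term vanishes exactly and the rest of your argument goes through.
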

 \begin{proof}
The inequality $\|\bw\|_{\bV^*} \le \|\bw\|_{\bX^*}$ is easily obtained, since $\bV \subset \bX$. For the reversed inequality, consider the orthogonal decomposition of 
$\bv \in \bX$, 
$$\bv = \bv_1 + \nabla q,$$
where $\nabla \cdot \bv_1 =0$ and  $q$ is in a subspace of $ H^2(\Omega_f)$, where the semi-norm $\|\Delta q\|_{\Omega_f}$ is equivalent to $\|q\|_{2,\Omega_f}$ 
 \cite{GiraultRaviart} (Sections 3.1 and 3.3). The decomposition is $L^2$-orthogonal, i.e., 
$ (\by, \nabla q) = 0$ for all $\by \in \bX$ satisfying $\nabla \cdot \by=0$.
%
%Note that $\Delta q = \nabla \cdot \bv$, and this yields
Also, 
$$
\| \nabla (\nabla q)\|_{\Omega_f} \le  C \|\Delta q \|_{\Omega_f} = C \|\nabla \cdot \bv\|_{\Omega_f} \le C \| \nabla  \bv\|_{\Omega_f}. 
$$
Then, by the triangular inequality,
$$
 \|\nabla \bv_1\|_{\Omega_f} \le  \| \nabla  \bv\|_{\Omega_f} + \| \nabla (\nabla q)\|_{\Omega_f} \le C_* \| \nabla \bv\|_{\Omega_f}.
 $$
This estimate and 
the orthogonality give
$$
\|\bw\|_{\bX^*} = \sup_{\bv \in \bX} \frac{(\bw,\bv)}{\|\nabla \bv\|_{\Omega_f}} = \sup_{\bv \in \bX} \frac{(\bw,\bv_1 +\nabla q)}{\|\nabla \bv\|_{\Omega_f}}
\le \sup_{\bv \in \bX} C_*\frac{(\bw,\bv_1)}{\|\nabla \bv_1\|_{\Omega_f}} =  C_* \sup_{\bz \in \bV} \frac{(\bw,\bz)}{\|\nabla \bz\|_{\Omega_f}} =  C_* \|\bw\|_{\bV^*} $$
Thus, we have $\|\bw\|_{\bX^{*}}  \le C_* \|\bw\|_{\bV^{*}}.$
 \end{proof}
 The variational formulation for the fluid-structure system \eqref{moment}-\eqref{poroin} is given by: given the initial conditions, 
 find $(\bu,p,\beeta) \in (\bX, Q,\bSigma)$, for a.e. $t \in (0,T)$, such that  \vspace{-0.2cm}
\begin{eqnarray}
& &  \rho_f \left( \partial_t \bu, \, \bv \right)
 + (2\nu_{f} D(\bu) ,D(\bv))  -( p,\nabla \cdot
\bv )  
\nn \\ & & \hspace{2in}
 =  ( \bbf_f, \bv ) + \left\langle \bsigma_f\bn_f, \bv \right\rangle_{\Gamma} \quad \forall \bv \in \textbf{X} \,, \label{vs1o}
  \\
& &  (q, \nabla \cdot \bu) = 0 \qquad \forall q \in Q \, , \label{vs2o} \\
&& \rho_s\left( \partial_t^2 \beeta, \, \bxi \right) + 2 \nu_s (D(\beeta),D(\bxi)) + \lambda (\nabla \cdot \beeta, \nabla \cdot \bxi)
  \nn \\ & & \hspace{2in}
 =  (\bbf_s, \bxi) +  \langle \bsigma_s\bn_s, \bxi\rangle_{\Gamma}
 \quad \forall \bxi \in  \bSigma. \label{vstr1o}
  \end{eqnarray}

If a normal stress function satisfying $\bsigma_f \bn_f = -\bsigma_s \bn_s$ on $\Gamma$ is given as Neumann conditions for the fluid and structure problems, each local problem is well-posed. In our approach, the normal stress function is represented by an unknown Lagrange multiplier and will be used to impose the continuity of the velocities on the interface.

\section{Global-in-time DD schemes} \label{sec:SWR}
This section discusses time-dependent interface problems for the fluid-structure system, from which global-in-time domain decomposition methods are developed. 
\subsection{Time-dependent Steklov-Poincar\'{e} operator}
We introduce the Lagrange multiplier $\bg \in \bLambda := \bH^{-1/2}(\Gamma)$ representing
\begin{equation}
    \bg :=  \bsigma_f\bn_f = -\bsigma_s\bn_s \quad \mbox{on } \Gamma \times (0,T) \,.
\end{equation}
The equations \eqref{vs1o}-\eqref{vstr1o} are then rewritten as 
\begin{eqnarray}
 & & \rho_f \left( \partial_t \bu, \, \bv \right)
 + (2\nu_{f} D(\bu) ,D(\bv))  -( p,\nabla \cdot
\bv )  \nn \\ & & \hspace{2in}
 =  ( \bbf_f, \bv ) + \left\langle \bg, \bv \right\rangle_{\Gamma} \quad \forall \bv \in \textbf{X} \,, \label{vs1}
  \\
  & & (q, \nabla \cdot \bu) = 0 \qquad \forall q \in Q \, , \label{vs2} \\
 & & \rho_s\left( \partial_t^2 \beeta, \, \bxi \right) + 2 \nu_s (D(\beeta),D(\bxi)) + \lambda (\nabla \cdot \beeta, \nabla \cdot \bxi)
  \nn \\ & & \hspace{2in}
 =  (\bbf_s, \bxi) -  \langle \bg, \bxi\rangle_{\Gamma}
 \quad \forall \bxi \in  \bSigma. \label{vstr1}
  \end{eqnarray}
Let $\bLambda^{*}$ denote the dual space of $\bLambda$ and  
define the following interface operators: for given $\bf_f, \bf_s$, $\bu^0$, $\beeta^{0}$, $\bar{\beeta}^{0}$
$$S_{f}: \bL^2(0,T; \bLambda) \longrightarrow \bL^2(0,T; \bLambda^{*}), \quad  S_{f}(\bg) = \bu(\bg)|_{\Gamma} \, , \,$$
$$S_{s}: \bL^2(0,T; \bLambda) \longrightarrow \bL^2(0,T; \bLambda^{*}), \quad 
S_{s}(\bg) = - \frac{\partial \beeta(\bg)}{\partial t}|_{\Gamma},$$
where $\bu(\bg)$ and $\beeta(\bg)$ are the solutions to the Stokes problem (\ref{vs1})-(\ref{vs2}) and the structure problem (\ref{vstr1}).
 In (\ref{vs1})-(\ref{vstr1}), the interface condition \eqref{inter1} has been imposed  through the use of a common $\bg$, however, the interface condition (\ref{inter2}) is not enforced.
 Therefore, the remaining condition  (\ref{inter2}) leads to the following time-dependent interface problem:  \\
\textit{for a.e $t \in (0,T),$ find $\bg(t) \in \bL^2(0,T,\bLambda)$ satisfying 
\begin{equation}\label{sp1}
\int_0^T \left( \langle S_{f}(\bg), \bv \rangle + \langle S_{s}(\bg), \bv \rangle \right)ds = 0 \quad  \forall \, \bv \in \bL^2(0,T,\bLambda).
\end{equation}
}
The evolutionary interface problem \eqref{sp1} can be solved using iterative methods, e.g., a Krylov method.

\begin{remark} \label{remark:nonsymmetric}
Since the time-dependent Steklov–Poincaré operators are nonsymmetric, the standard methods cannot be implemented to prove the existence and uniqueness of solutions to the space-time interface problem \eqref{sp1}. Thus, addressing this issue is an open question. Also see remark 1 of \cite{THe}.
\end{remark}

 \begin{remark} \label{remark:nonlinear}
If the nonlinear Stokes or the Navier-Stokes equations are considered for the FSI system,
the interface operator is nonlinear, and an iteration formula for the nonlinear problem is defined using the linearized fluid equations. See \cite{THe}. 
\end{remark}

\subsection{Robin transmission conditions and the space-time interface problem}\label{subsec:robin}
The two-sided Robin interface conditions on $\Gamma$ are established by linearly combining equations (\ref{inter1}) and (\ref{inter2}) with coefficients of $(\alpha_f, 1)$ and $(-\alpha_s, 1)$, respectively, where $\alpha_f,\alpha_s > 0$ \cite{gig}:
\begin{equation}\label{R1}
    \bg_f:=\alpha_f \bu + \bsigma_f\bn_f = \alpha_f\partial_t \beeta -\bsigma_s\bn_s \quad \mbox{on } \Gamma \times (0,T)  \,,
\end{equation}
\begin{equation}\label{R2}
    \bg_s:=-\alpha_s\partial_t \beeta -\bsigma_s\bn_s = -\alpha_s \bu + \bsigma_f\bn_f  \quad \mbox{on } \Gamma \times (0,T) \,.
\end{equation}
\begin{remark}[Regularity of Normal stress] 
 With the function spaces defined in Section 2, the stress functions $\bsigma_f\bn_f$, $\bsigma_s\bn_s$ in (3.6)-(3.7) are in 
$\bH^{-1/2}(\Gamma)$. However, in order to formulate the DD scheme as an interface problem and analyze it, we need the interface functions $\bg_f$, and $\bg_s$ to have
 $L^2$-regularity.  
Therefore, we assume that the weak formulations \eqref{fl1}-\eqref{st1} and \eqref{fsi_dis_Stokes_1}-\eqref{fsi_dis_str_3}, considered for the SWR algorithm
 in the subsequent sections, are well-posed with sufficient solution regularity so that $\bsigma_f\bn_f$ and $\bsigma_s\bn_s$ have
the $L^2$ regularity in $\Gamma$. 
Unfortunately, we have not yet established or come across a proof of these regularity results in the literature. Nonetheless, they have been assumed in several studies (see Remark 3.1 of \cite{A}).\label{regularity}
\end{remark}

%\noindent
The regularity assumption in Remark \ref{regularity} allows the unknown functions $\bg_f$ and $\bg_s$ to have $L^2$-regularity in space, which is needed to define interface operators for the variables and analyze the DD algorithm discussed later in this section.

If we let $\bg_f$ be a Robin condition for the Stokes equations with the parameter $\alpha_f >0$ as in the left-hand side of \eqref{R1}, 
the corresponding weak formulation is given as follows: find $(\bu,p) \in (\bX, Q)$, for a.e. $t \in (0,T)$, such that
\begin{eqnarray}
  & &   \rho_f( \partial_t \bu,\bv) +2\nu_f(D(\bu),D(\bv)) -(p,\nabla \cdot \bv) \nonumber \\
    & & \hspace{1in}+ \alpha_f(\bu, \bv)_{\Gamma} \,  = \, (\bf_f,\bv)+ (\bg_f, \bv)_{\Gamma} \quad \forall \bv \in \textbf{X} \,,  \label{vs1pcovph}  \\
  & &    (q,( \nabla \cdot \bu) \, = \, 0 \quad \forall q \in Q \,. \label{vs2pcovph} 
  %\\
 % && (\bu(\cdot, 0), \bv)= (\bu_{0},  \bv) \quad \forall \bv \in \textbf{X} \,.  \label{vs3pcovph} 
 \end{eqnarray}
 Similarly, considering $\bg_s$  as a Robin condition for the elastic system with the parameter $\alpha_s>0$ as in \eqref{R2}, 
 we have the weak formulation given by:
 find $\beeta \in \bSigma$, for a.e. $t \in (0,T)$ satisfying
 
 \begin{eqnarray}
 \rho_s\left(\partial_t^2 \beeta, \, \bxi \right) + 2 \nu_s (D(\beeta),D(\bxi)) + \lambda (\nabla \cdot \beeta, \nabla \cdot \bxi) + \alpha_s(\partial_t\beeta, \bxi)_{\Gamma} \nn \\
 =  (\bbf_s, \bxi) -  ( \bg_s ,\bxi)_{\Gamma},
 \quad \forall \bxi \in  \bSigma.\label{wkst}
 \end{eqnarray}

Denote by $(\bu,p)=\left (\bu(\bg_{f},\bf_{f},\bu_{0}), p(\bg_{f},\bf_{f},\bu_{0})\right )$ the solution to the Stokes problem~\eqref{vs1pcovph}-\eqref{vs2pcovph}, and $\beeta=\beeta(\bg_{s},\bf_{s},\beeta_{0}, \bar{\beeta}^0)$ the solution to the structure problem~\eqref{wkst}. To derive the interface problem associated with the Robin conditions \eqref{R1}-\eqref{R2}, we first define the interface operator:
$$
{\cal R} : \left(L^2(0,T; \, \bL^2(\Gamma)) \right)^2  \rightarrow   \left(L^2(0,T; \, \bL^2(\Gamma)) \right)^2,
$$
such that 
\begin{equation} \label{R_def}
{\cal R} \left[ \begin{array}{c} \bg_f  \\ \bg_s  \end{array} \right]  = \left[ \begin{array}{c} \bg_s + (\alpha_s+\alpha_f)  \left( \partial_t \beeta (\bg_{s},\bf_{s},\beeta_{0},\bar{\beeta}_0)\right) \mid_{\Gamma} 
\vspace{4pt} \\ 
\bg_f - (\alpha_f+\alpha_s)  \left(  \bu (\bg_{f},\bf_{f},\bu_{0}) \right) \mid_{\Gamma}  \end{array} \right] \,.
\end{equation}
The Robin transmission conditions \eqref{R1}-\eqref{R2} are then equivalent to the following space-time interface problem for two interface variables $\bg_f$ and $\bg_s$:
\begin{equation} \label{ifprob}
{\cal S_R} \left[ \begin{array}{c} \bg_f  \\ \bg_s  \end{array} \right] = \chi_{\cal R} \quad \text{on} \; \Gamma \times (0,T),
\end{equation}
where 
$$
{\cal S_{R}} \left[ \begin{array}{c} \bg_f  \\ \bg_s  \end{array} \right] = \left[ \begin{array}{c} \bg_f  \\ \bg_s  \end{array} \right] - \left[ \begin{array}{c} \bg_s + (\alpha_s+\alpha_f)  \left( \partial_t \beeta (\bg_{s},\textbf{0},\textbf{0},\textbf{0})\right) \mid_{\Gamma} 
\vspace{4pt} \\ 
\bg_f - (\alpha_f+\alpha_s)  \left(  \bu (\bg_{f},\textbf{0},\textbf{0}) \right) \mid_{\Gamma}  \end{array} \right]
$$
and
$$ 
\chi_{\cal R}=\left[ \begin{array}{c} (\alpha_s+\alpha_f)  \left( \partial_t \beeta (\textbf{0},\bf_{s},\beeta_{0},\bar{\beeta}_0)\right) \mid_{\Gamma} 
\vspace{4pt} \\ 
- (\alpha_f+\alpha_s)  \left(  \bu (\textbf{0},\bf_{f},\bu_{0}) \right) \mid_{\Gamma}  \end{array} \right].
$$  
The weak form of \eqref{ifprob} reads as: find $(\bg_{f},\bg_{s}) \in  \left(L^2(0,T; \, \bL^2(\Gamma)) \right)^2  $, for a.e. $t \in (0,T)$, such that 
\begin{equation} \label{ifprob_weak}
\int_0^T \int_{\Gamma} \left ({\cal S_{R}} \left[ \begin{array}{c} \bg_f  \\ \bg_s  \end{array} \right] \cdot  \left[ \begin{array}{c} \bxi_{f}  \\ \bxi_s  \end{array} \right]  \right ) \, d\gamma \, dt  = \int_0^T \int_{\Gamma} \left (\chi_{\cal R} \cdot  \left[ \begin{array}{c} \bxi_{f}  \\ \bxi_p  \end{array} \right]\right ) \, d\gamma \, dt   
%\quad \forall \left (\bxi_{f}, \bxi_{p}\right ) \in \left (\bL^2(\Gamma)\right )^{2}.
\end{equation}
$\forall \left (\bxi_{f}, \bxi_{p}\right ) \in  \left(L^2(0,T; \, \bL^2(\Gamma)) \right)^2$. 

The interface problem \eqref{ifprob_weak} can be solved by iterative methods such as GMRES and simple Jacobi-type methods. We consider a Schwarz waveform relaxation (SWR) algorithm 
based on Robin transmission conditions and show the convergence of the algorithm.

\subsubsection{Schwarz waveform relaxation (SWR) algorithm}\label{subsec:SWR}
Consider the following SWR algorithm based on Robin transmission conditions: at the $k$th iteration step we solve
\begin{eqnarray}
\rho_f\partial_t \bu^{k} -\nabla \cdot (2\nu_f D(\bu^{k})-p^{k} \bI)&=&\bf_f \quad  \mbox{in } \Omega_f \times (0,T)\,, \label{s1o} \\
 \nabla \cdot \bu^{k} &=&0 \quad \mbox{in } \Omega_f \times (0,T)\,, \label{s2o}
  \\
 \alpha_f \bu^{k} + \bsigma^{k}_f\bn_f &=& \alpha_f\partial_t \beeta^{k-1} -\bsigma^{k-1}_s\bn_s  \quad \mbox{on } \Gamma \times (0,T)\,, \label{s4o}
 \end{eqnarray}
for  $(\bu^{k}, p^{k})$ satisfying the initial and boundary conditions \eqref{fluidin}, \eqref{fluidbd}, and
\begin{eqnarray}
\rho_s \partial_t^2 \beeta^{k} -2 \nu_s \, \nabla \cdot D(\beeta^{k})  - \lambda \nabla (\nabla \cdot \beeta^{k}) &=& \bbf^{k}_s \quad \mbox{in } \Omega_s \times (0, T) \,,\\
 -\alpha_s\partial_t \beeta^{k} -\bsigma^{k}_s\bn_s &=& -\alpha_s \bu^{k-1} + \bsigma^{k-1}_f\bn_f  \quad \mbox{on } \Gamma \times (0,T) \label{d4o}
 \end{eqnarray}
for $\beeta^{k}$ satisfying \eqref{porobd}-\eqref{poroin}.
The weak formulation of this decoupled system is written as:  at the $k$th iteration, find $(\bu^{k},p^{k},\beeta^{k}) \in (\bX, Q, \bSigma)$, for a.e. $t \in (0,T)$, such that
\begin{eqnarray}
 & & \rho_f \left( \partial_t \bu^{k}, \, \bv \right)
 + (2\nu_{f} D(\bu^{k}) ,D(\bv))  -( p^{k},\nabla \cdot
\bv ) +  \alpha_f(\bu^{k},\bv)_{\Gamma}  \nn \\ & & \hspace{.8in}
 =  ( \bbf_f, \bv ) +  \left( \alpha_f\partial_t \beeta^{k-1} -\bsigma^{k-1}_s\bn_s, \bv \right)_{\Gamma} \qquad \forall \bv \in \textbf{X} \,, \label{fl1}
  \\[1.5ex]
  & & (q, \nabla \cdot \bu^{k}) = 0 \qquad \forall q \in Q\,, \label{fl2} \\[1.5ex]
 & & \rho_s\left( \partial_t^2 \beeta^{k}, \, \bxi \right) + 2 \nu_s (D(\beeta^{k}),D(\bxi)) + \lambda (\nabla \cdot \beeta^{k}, \nabla \cdot \bxi) + \alpha_s(\partial_t \beeta^{k}, \bxi)_{\Gamma} \nn \\ & & \hspace{.5in}
 =  (\bbf_s, \bxi) -  ( -\alpha_s \bu^{k-1} + \bsigma^{k-1}_f\bn_f ,\bxi)_{\Gamma},
 \quad \forall \bxi \in  \bSigma. \label{st1}
 \end{eqnarray}

%A similar algorithm was studied for the  FSI system in \cite{A}
%where subproblems are solved in a common local time interval.
%\textcolor{red}{
%As mentioned in Remark \ref{regularity}, $L^2$ regularity of the last terms  \eqref{fl1} and \eqref{st1} are required.
%Therefore, we assume the weak formulation \eqref{fl1}-\eqref{st1} and the discrete problem \eqref{fsi_dis_Stokes_1}-\eqref{fsi_dis_str_3}  
% are well-posed with required solution regularity for the convergence analysis of the algorithm.}

In the next theorem, we prove the convergence of the proposed algorithm. The idea of proving the convergence of the Robin-Robin method by energy estimates comes from \cite{Lion}. Then, it was adapted to the time-dependent problems in \cite{Martin}, and also used in \cite{HJJ13,THH}. We apply the same approach to our problem. 

%\subsection{Convergence analysis}
\begin{theorem}
Suppose $\bf_f \in \textbf{X}^{*}$, $\bf_s \in \bSigma^{*}$ and $\alpha_s \geq \alpha_f > 0.$ If an initial  $(\bu^0, \beeta^0,\bar{\beeta}^0)$ is chosen such that the Robin-Robin conditions (\ref{R1}), (\ref{R2}) are well-defined in $\bL^2({\Gamma})$ for a.e. $t \in (0,T)$, then 
 the weak formulation
 (\ref{fl1})-(\ref{st1}) generates a convergent sequence of iterates
 $$(\bu^{k},\beeta^{k})\in L^{\infty}(0,T;\textbf{X})\times L^{\infty}(0,T;\bSigma).$$ 
With the additional assumption that $\bsigma_s \in \bH^1(\Omega_s)$ and $\nabla \cdot \bu \in C^0(\Omega_f)$, the pressure $p^{k}$ also converges in $L^{2}(0,T;Q)$.
  \label{theo1}
\end{theorem}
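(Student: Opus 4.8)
The plan is to argue by energy estimates in the spirit of Lions, applied to the error between the $k$th iterate and the exact monolithic FSI solution $(\bu,p,\beeta)$ of \eqref{vs1}-\eqref{vstr1}. Set $\be_u^k=\bu^k-\bu$, $\be_p^k=p^k-p$, $\be_\eta^k=\beeta^k-\beeta$. Since the body forces and initial data are common to every iterate and to the exact solution, subtracting the weak forms shows that $(\be_u^k,\be_p^k,\be_\eta^k)$ solves the homogeneous analogues of \eqref{fl1}-\eqref{st1} with zero right-hand side and zero initial conditions, and that the Robin conditions \eqref{s4o}, \eqref{d4o} become relations among the interface traces of the errors: writing $F^k=(\bsigma_f^k-\bsigma_f)\bn_f$, $G^k=(\bsigma_s^k-\bsigma_s)\bn_s$, $U^k=\be_u^k|_\Gamma$, $W^k=\partial_t\be_\eta^k|_\Gamma$, one has $\alpha_f U^k+F^k=\alpha_f W^{k-1}-G^{k-1}$ and $\alpha_s W^k+G^k=\alpha_s U^{k-1}-F^{k-1}$ on $\Gamma$. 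The regularity hypothesis of Remark~\ref{regularity} is exactly what makes these identities meaningful in $\bL^2(\Gamma)$.

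Next I would test the fluid error equation with $\bv=\be_u^k$ and the structure error equation with $\bxi=\partial_t\be_\eta^k$, and integrate over $(0,T)$. Because $\nabla\cdot\be_u^k=0$ the pressure term drops out, and the vanishing initial data turn the time-derivative terms into end-point energies; this yields the two Green-type identities
\[
\tfrac{\rho_f}{2}\|\be_u^k(T)\|^2+\int_0^T 2\nu_f\|D(\be_u^k)\|^2\,dt=\int_0^T (F^k,U^k)_\Gamma\,dt=:\mathcal{E}_f^k,
\]
\[
\tfrac{\rho_s}{2}\|\partial_t\be_\eta^k(T)\|^2+\nu_s\|D(\be_\eta^k(T))\|^2+\tfrac{\lambda}{2}\|\nabla\cdot\be_\eta^k(T)\|^2=\int_0^T(G^k,W^k)_\Gamma\,dt=:\mathcal{E}_s^k,
\]
both of which are nonnegative. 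The heart of the argument is then the elementary identity $\|\alpha w+\sigma\|^2-\|\alpha w-\sigma\|^2=4\alpha(w,\sigma)$: applying it to the outgoing and incoming Robin combinations on each side lets me rewrite the interface fluxes $\int_0^T(F^k,U^k)_\Gamma\,dt$ and $\int_0^T(G^k,W^k)_\Gamma\,dt$ as differences of squared $L^2(0,T;\bL^2(\Gamma))$ norms of the transmitted data, and the update relations identify the data received at step $k$ with the data emitted at step $k-1$.

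Letting $\Phi^k,\Psi^k$ denote the squared $L^2(0,T;\bL^2(\Gamma))$ norms of the emitted quantities $F^k-\alpha_s U^k$ and $\alpha_f W^k-G^k$, the two energy identities combine into telescoping relations of the form $\Phi^k=\Psi^{k-1}-2(\alpha_f+\alpha_s)\mathcal{E}_f^k+(\alpha_f+\alpha_s)(\alpha_s-\alpha_f)\int_0^T\|U^k\|_\Gamma^2\,dt$ and $\Psi^k=\Phi^{k-1}-2(\alpha_f+\alpha_s)\mathcal{E}_s^k-(\alpha_f+\alpha_s)(\alpha_s-\alpha_f)\int_0^T\|W^k\|_\Gamma^2\,dt$. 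Here the hypothesis $\alpha_s\ge\alpha_f$ is decisive: it fixes the sign of the cross terms, and combining the two channels produces a nonincreasing interface energy bounded below by $0$. Its increments, which dominate $\mathcal{E}_f^k+\mathcal{E}_s^k$, are therefore summable, forcing $\mathcal{E}_f^k+\mathcal{E}_s^k\to0$. Korn's inequality with the homogeneous Dirichlet conditions on $\Gamma_f,\Gamma_s$ (and Poincar\'e for the displacement) then upgrade the vanishing of the viscous and elastic energies to $\be_u^k\to0$ in $L^\infty(0,T;\bX)$ and $\be_\eta^k\to0$ in $L^\infty(0,T;\bSigma)$.

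I expect the main obstacle to be precisely the bookkeeping of these cross terms when $\alpha_s\neq\alpha_f$: for equal parameters they cancel and the interface energy is trivially monotone, while for $\alpha_s>\alpha_f$ the indefinite contribution must be absorbed using the sign condition together with a trace/Korn bound of $\int_0^T\|U^k\|_\Gamma^2\,dt$ by the fluid dissipation. A second difficulty is that the structure equation is hyperbolic, so testing with $\partial_t\be_\eta^k$ yields a conserved elastic energy rather than a bulk dissipation; the interface damping $\alpha_s\|W^k\|_\Gamma^2$ must be shown to supply the missing control, and the low regularity of the normal stress (only $\bH^{-1/2}(\Gamma)$ a priori) is what forces the $\bL^2(\Gamma)$ assumption of Remark~\ref{regularity}. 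Finally, for the pressure I would return to the fluid momentum identity and bound $\be_p^k$ through the inf-sup condition \eqref{infsup}; the right-hand side functional involves $\partial_t\be_u^k$, controlled only in a dual norm, so the extra hypotheses $\bsigma_s\in\bH^1(\Omega_s)$ and $\nabla\cdot\bu\in C^0(\Omega_f)$ enter here, the latter through Lemma~\ref{VXdualrelation} to pass from the $\bV^{*}$ to the $\bX^{*}$ norm, and integrating in time then gives $\be_p^k\to0$ in $L^2(0,T;Q)$.
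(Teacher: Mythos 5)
Your overall strategy coincides with the paper's: reduce to the homogeneous problem by linearity, test the fluid equation with the velocity iterate and the structure equation with $\partial_t\beeta^k$, convert the interface fluxes into differences of squared $\bL^2(\Gamma)$ norms of the emitted and received Robin data via the polarization identities, telescope over $k$, and recover the pressure from the inf-sup condition together with Lemma~\ref{VXdualrelation}. The recursions you write for $\Phi^k,\Psi^k$, including the signs of the two cross terms, are exactly the paper's \eqref{k1} and \eqref{k2}.

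There is, however, one step that does not go through as you describe it. You propose to neutralize the indefinite cross term $(\alpha_f+\alpha_s)(\alpha_s-\alpha_f)\int_0^T\|U^k\|_\Gamma^2\,dt$ by ``a trace/Korn bound \dots by the fluid dissipation.'' A direct trace/Korn estimate gives $\int_0^T\|\bu^k\|_\Gamma^2\,dt\le C\int_0^T\|D(\bu^k)\|_{\Omega_f}^2\,dt$ with $C$ depending only on the domain, so absorption into the viscous term $2\nu_f\int_0^T\|D(\bu^k)\|_{\Omega_f}^2\,dt$ would require $(\alpha_s-\alpha_f)\,C\lesssim \nu_f$, a smallness condition the theorem does not assume (and which the numerics, with $\alpha_s=100$, $\alpha_f=1$, violate). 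The paper instead uses the multiplicative trace inequality $\|\bu^k\|_\Gamma^2\le \overline{C}\,\|\bu^k\|_{\Omega_f}\|\bu^k\|_{1,\Omega_f}$ with Young's inequality, absorbing only the $\|D(\bu^k)\|_{\Omega_f}^2$ half into the viscosity and leaving a term $C\int_0^t\|\bu^k\|_{\Omega_f}^2\,ds$; this leftover is then handled by summing the recursion over $k$ and applying Gronwall's lemma to $\sum_{k=1}^K\|\bu^k(t)\|_{\Omega_f}^2$ (steps \eqref{resultforgron}--\eqref{convergenceinu}). That Gronwall step is the one idea missing from your outline: without it the interface energy is not monotone and the summability of $\mathcal{E}_f^k+\mathcal{E}_s^k$ does not follow. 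A secondary point: the energies you control give $D(\bu^k)\to 0$ only in $L^2(0,T;\bL^2(\Omega_f))$, so Korn yields convergence of the velocity in $L^2(0,T;\bX)$ and $L^\infty(0,T;\bL^2(\Omega_f))$ rather than the $L^\infty(0,T;\bX)$ you assert (the paper's proof establishes these same weaker norms).
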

\begin{proof}
Since the equations \eqref{fl1}-\eqref{st1} are linear, we show that the iterate $(\bu^{k},p^{k},\beeta^{k})$  converges to zero in suitable norms by setting $\bf_f = \bu^0 = \textbf{0}$ and $\bf_s = \beeta^0=\bar{\beeta}^0=\textbf{0}$.
The proof is organized as follows: we begin by applying identities \eqref{sq1} and \eqref{sq21} in the weak formulation of both the fluid and structure subproblems respectively. These identities are chosen to create matching terms on both sides of \eqref{hkineq}, which will cancel each other out and result in a constant when summed across iterations. Next, by combining the results with the use of the Robin conditions and applying the Gronwall lemma, we establish the convergence of the velocity and displacement in respective norms. Finally, we utilize the energy estimates of velocity and displacement, along with \Cref{VXdualrelation}, to establish the convergence of the pressure.
Taking $\bv=\bu^k$ and $q=p^k$ in \eqref{fl1} and \eqref{fl2}, we get
\begin{eqnarray}\label{ns1}
&&\rho_f \left( \partial_t \bu^{k}, \, \bu^{k} \right)
 + (2\nu_{f} D(\bu^{k}) ,D(\bu^{k}))+ \alpha_f(\bu^{k},\bu^{k})_{\Gamma}  \nn \\ & & \hspace{.8in}
 = \left( \alpha_f\partial_t \beeta^{k-1} -\bsigma^{k-1}_s\bn_s, \bu^{k} \right)_{\Gamma}.
\end{eqnarray}
Using the identity
\begin{equation}\label{sq1}
    (\bsigma_f^k\bn_f + \alpha_f \bu^k)^2 - (\bsigma_f^k\bn_f - \alpha_s \bu^k)^2 = 2(\alpha_f + \alpha_s)(\bsigma_f^k\bn_f)(\bu^k)+(\alpha_f^2-\alpha_s^2)(\bu^k)^2
\end{equation}
and the Robin condition \eqref{s4o},  we can rewrite \eqref{ns1} as 
\begin{eqnarray}
 & & \rho_f \left( \partial_t \bu^k, \, \bu^k \right)
 +2\nu_{f} \norm{D(\bu^k)}_{\Omega_f}^2 +\frac{1}{2(\alpha_f + \alpha_s)}\int_{\Gamma}(\bsigma_f^k\bn_f - \alpha_s \bu^k)^2 \; d\gamma  \nn \\ & &  \hspace{.2in}
 =  \frac{1}{2(\alpha_f + \alpha_s)}\int_{\Gamma}(\bsigma_f^k\bn_f + \alpha_f\bu^k)^2 \; d\gamma+\frac{1}{2}(\alpha_s - \alpha_f)\int_{\Gamma}(\bu^k)^2 \; d\gamma
  \nn \\ & & \hspace{.2in}
 =  \frac{1}{2(\alpha_f + \alpha_s)}\int_{\Gamma}(-\bsigma_s^{k-1}\bn_s + \alpha_f \partial_t \beeta^{k-1})^2 \;d\gamma+\frac{1}{2}(\alpha_s - \alpha_f)\int_{\Gamma}(\bu^k)^2 \; d\gamma. 
\end{eqnarray}
We then integrate over $(0,t)$ for a.e. $t\in (0,T]$. And, since $\bu^k \in \bH^{1}(\Omega_f)$, apply the trace theorem and Young's inequality to obtain, 
\begin{eqnarray}
 & & \frac{\rho_f}{2} \norm{\bu^k}^2_{\Omega_f}
 +2\nu_{f} \int_0^t \norm{D(\bu^k)}_{\Omega_f}^2ds +\frac{1}{2(\alpha_f + \alpha_s)}\int_0^t \int_{\Gamma}(\bsigma_f^k\bn_f - \alpha_s \bu^k)^2  d\gamma\, ds  \nn \\ & & %\hspace{.5in}
 \leq  \frac{1}{2(\alpha_f + \alpha_s)}\int_0^t \int_{\Gamma}(-\bsigma_s^{k-1}\bn_s + \alpha_f \partial_t \beeta^{k-1})^2 d\gamma\, ds + \overline{C}\int_0^{t} \norm{\bu^k(s)}_{\Omega_f}\norm{D(\bu^k(s))}_{\Omega_f} d\gamma \,ds  \nn \\ & &
  \leq  \frac{1}{2(\alpha_f + \alpha_s)}\int_0^t \int_{\Gamma}(-\bsigma_s^{k-1}\bn_s + \alpha_f \partial_t \beeta^{k-1})^2 d\gamma \, ds \nn \\ & &
 \hspace{.5in} + \overline{C}\int_0^{t} \left(\frac{1}{4\epsilon}\norm{\bu^k(s)}^2_{\Omega_f}+\epsilon \norm{D(\bu^k(s))}^2_{\Omega_f}\right)ds
\end{eqnarray}
for some constant $\overline{C}>0$ and $\epsilon>0$. Choosing $\epsilon = \nu_f/\overline{C}$, we have
\begin{eqnarray}\label{k1}
 & & \frac{\rho_f}{2} \norm{\bu^k}^2_{\Omega_f}
 +\nu_{f} \int_0^t \norm{D(\bu^k)}_{\Omega_f}^2ds +\frac{1}{2(\alpha_f + \alpha_s)}\int_0^t \int_{\Gamma}(\bsigma_f^k\bn_f - \alpha_s \bu^k)^2 d\gamma \, ds  \nn \\ & & 
 \leq \frac{1}{2(\alpha_f + \alpha_s)}\int_0^t \int_{\Gamma}(-\bsigma_s^{k-1}\bn_s + \alpha_f \partial_t \beeta^{k-1})^2 d\gamma\, ds +C\int_0^{t} \norm{\bu^k(s)}^2_{\Omega_f}ds,
\end{eqnarray}
where $C = \overline{C}^2/(4\nu_f)$.

Now, similarly for the structure part, taking $\bxi = \partial_t \beeta^k$ in \eqref{st1},
\begin{eqnarray}\label{s1}
 & & \rho_s\left( \partial_t^2 \beeta^k, \, \partial_t \beeta^k \right) + 2 \nu_s (D(\beeta^k),D(\partial_t \beeta^k)) + \lambda (\nabla \cdot \beeta^k, \nabla \cdot \partial_t \beeta^k) + \alpha_s(\partial_t \beeta^{k},  \partial \beeta^{k})_{\Gamma} \nn \\ & & \hspace{.5in}
 =  -( -\alpha_s \bu^{k-1} + \bsigma^{k-1}_f\bn_f,\partial_t \beeta^k)_{\Gamma}.
\end{eqnarray}
Using the identity
\begin{equation}\label{sq21}
    (-\bsigma_s^k\bn_s - \alpha_s \partial_t \beeta^k)^2 - (-\bsigma_s^k\bn_s + \alpha_f \partial_t \beeta^k)^2 = 2(\alpha_f + \alpha_s)(\bsigma_s^k\bn_s)(\partial_t \beeta^k)+(\alpha_s^2-\alpha_f^2)(\partial_t \beeta^k)^2
\end{equation}
and the Robin condition \eqref{d4o}, \eqref{s1} implies 
\begin{eqnarray}
 & & \rho_s\left( \partial_t^2 \beeta^k, \, \partial_t \beeta^k \right) + 2 \nu_s (D(\beeta^k),\partial_t D(\beeta^k)) + \lambda (\nabla \cdot \beeta^k, \partial_t \nabla \cdot \beeta^k) \nn \\ & & + \frac{\alpha_s - \alpha_f}{2} \int_{\Gamma}(\partial_t \beeta^k)^2 \; d\gamma + \frac{1}{2(\alpha_f + \alpha_s)} \int_{\Gamma}(-\bsigma_s^k\bn_s + \alpha_f \partial_t \beeta^k)^2 \; d\gamma \nn \\ & & \hspace{.5in}
 \leq   \frac{1}{2(\alpha_f + \alpha_s)} \int_{\Gamma}(-\bsigma_s^k\bn_s - \alpha_s \partial_t \beeta^k)^2 \; d\gamma.
\end{eqnarray}
Integrate over $(0,t)$ for a.e. $t\in (0,T)$ and apply the Robin boundary conditions \eqref{d4o} to obtain
\begin{eqnarray}\label{k2}
& & \frac{\rho_s}{2}\norm{\partial_t \beeta^k}^2_{\Omega_s} + \nu_s \norm{D(\beeta^k)}^2_{\Omega_s} + \frac{\lambda}{2} \norm{\nabla \cdot \beeta^k}^2_{\Omega_s}+ \frac{\alpha_s - \alpha_f}{2} \int_0^t \norm{\partial_t \beeta^k}^2_{\Gamma} \; ds  \nn \\ & & \hspace{.5in}
+ \frac{1}{2(\alpha_f + \alpha_s)} \int_0^t \int_{\Gamma}(-\bsigma_s^k\bn_s + \alpha_f \partial_t \beeta^k)^2 \;d\gamma\, ds \nn \\ & & \hspace{.5in}
 \leq  \frac{1}{2(\alpha_f + \alpha_s)} \int_0^t \int_{\Gamma}(\bsigma_f^{k-1}\bn_f - \alpha_s \bu^{k-1})^2 \; d\gamma\,ds.
\end{eqnarray}
 Define, for all $k \geq 1$ and for a.e. $t\in (0,T]$,
\begin{eqnarray*}
    E^{k}(t)& := &\frac{\rho_f}{2}\norm{\bu^k(t)}_{\Omega_f}^2+ \nu_f\int_{0}^t \norm{D(\bu^k)}_{\Omega_f}^2 \;ds+\frac{\rho_s}{2} \norm{\partial_t\beeta^k }_{\Omega_s}^2 \; \\
    && + \nu_s \norm{D(\beeta^k)}^2_{\Omega_s} + \frac{\lambda}{2} \norm{\nabla \cdot \beeta^k}^2_{\Omega_s} + \frac{\alpha_s - \alpha_f}{2} \int_0^t \norm{\partial_t \beeta^k}^2_{\Gamma}ds, \\
    B^k(t) &:= &  \frac{1}{2(\alpha_f+\alpha_s)} \int_0^t \norm{\bsigma_f^k\bn_f - \alpha_s \bu^k}_{\Gamma}^2  \; ds
    \\
    &&+ \frac{1}{2(\alpha_f+\alpha_s)} \int_0^t \norm{-\bsigma_s^k\bn_s + \alpha_f \partial_t \beeta^k}_{\Gamma}^2 \; ds \, ,
\end{eqnarray*}
where $\frac{\alpha_s - \alpha_f}{2}\ge0$ with the assumption $\alpha_s \ge \alpha_f$.
Adding \eqref{k1} to \eqref{k2}
yields 
\begin{equation}\label{hkineq}
   E^k(t)+B^k(t)\leq B^{k-1}(t)+C \int_0^t\|\bu^k(s)\|_{\Omega_f}^2 \;ds \, , 
\end{equation}
and summing over the iterates for any given $K>0$, we obtain
 \begin{equation} \label{resultforgron}
     \sum_{k=1}^K E^k(t)\leq B^0 (t)+C \sum_{k=1}^K \int_0^t\|\bu^k(s)\|_{\Omega_f}^2 \; ds. 
 \end{equation}
 In \eqref{resultforgron}  $$B^0(t)=\frac{1}{2(\alpha_f+\alpha_s)} \int_0^t g_0 \, ds, $$
where $g_0 =\norm{\bsigma_f^0\bn_f - \alpha_s \bu^0}_{\Gamma}^2 +\norm{-\bsigma_s^0\bn_s + \alpha_f \partial_t \beeta^0}_{\Gamma}^2$  is obtained by the initial guess. 
Now, the definition of $E^k(t)$ and  (\ref{resultforgron}) yield 
$$\frac{\rho_f}{2}\sum_{k=1}^K \|\bu^k(t)\|_{\Omega_f}^2 \leq B^0 (t)+C \sum_{k=1}^K \int_0^t \|\bu^k(s)\|_{\Omega_f}^2 \;ds,$$
and applying Gronwall's lemma, we obtain 
\begin{equation}\label{convergenceinu}
    \sum_{k=1}^K \|\bu^k(t)\|_{\Omega_f}^2 \leq \frac{2B^0(T)}{\rho_f}e^{\frac{2C T}{\rho_f}}
\end{equation}
for any $K>0$ and a.e. $t\in(0,T)$. 
The inequality (\ref{convergenceinu}) implies that $\bu^k$ converges to 0 in $L^{\infty}(0,T;\mathbf{L^2}(\Omega_f))$ as $k \rightarrow \infty$. Also,  
the inequalities (\ref{resultforgron}) and (\ref{convergenceinu}) yield
\begin{eqnarray}
 &  &\sum_{k=1}^K \left(\nu_f\int_{0}^t \norm{D(\bu^k)}_{\Omega_f}^2 ds+\frac{\rho_s}{2} \norm{\partial_t\beeta^k }_{\Omega_s}^2 + \nu_s \norm{D(\beeta^k)}^2_{\Omega_s} + \frac{\lambda}{2} \norm{\nabla \cdot \beeta^k}^2_{\Omega_s}  \right.\nn \\
 & &  \left. \hspace{.2in} 
+ \frac{\alpha_s-\alpha_f}{2} \int_0^t \norm{\partial_t \beeta^k}^2_{\Gamma}ds \right)
    \, \leq  \, \left(1+\frac{2CT}{\rho_f}e^{\frac{2C T}{\rho_f}}\right)B^0(T), \label{convine2}
\end{eqnarray}
 which implies that  $D(\bu^k)$,  $\partial_t \beeta^k$,  $D(\beeta^k)$, $\nabla \cdot \beeta^k$ converge to $0$ in $L^{2}(0,T;\textbf{L}^2(\Omega_f))$, $L^{\infty}(0,T;\textbf{L}^2(\Omega_s))$, $L^{\infty}(0,T;\textbf{L}^2(\Omega_s))$,
 $L^{\infty}(0,T;L^2(\Omega_s))$,  respectively, as $k \rightarrow \infty$. In addition, $\bsigma_s^k\bn_s$ converges to 0 in $L^{\infty}(0,T;\textbf{L}^2(\Omega_s))$  by its definition, and using Poincar\'{e}-Friedrichs inequality, the convergence of $\beeta^k$ to $0$ in $L^{\infty}(0,T;\textbf{L}^2(\Omega_s))$
is obtained.

By the Robin condition (\ref{R2}) with $\bg_s \in \bL^2(\Gamma)$, the trace $\bsigma_s\bn_s$ on $\Gamma$ is in $\bL^2(\Gamma)$ for a.e. $t\in(0,T)$. We prove the convergence of 
$p^{k}$ using same approach mentioned in \cite{phlee} with additional regularity assumption that $\bsigma_s$ is $ \bH^1(\Omega_s)$ for a.e. $t\in(0,T)$.
First, we estimate a bound for the time derivative term in \eqref{fl1}. For $\bv \in \textbf{V}$ the equation \eqref{fl1} is written as 
 \begin{eqnarray}
     \rho_f(\partial_t \bu^k,\bv)
     &= & -2\nu_f(D(\bu^{k}),D(\bv)) -\alpha_f ( \bu^{k},\bv)_{\Gamma}+ (\alpha_f \partial_t \beeta^{k-1} - \bsigma_s^{k-1} \bn_s, \bv)_{\Gamma} \,     \label{hs1}
\end{eqnarray}
Then, using Cauchy-Schwarz inequality, we have for some constants $C_1, C_2, C_3 >0,$
 \begin{eqnarray}
     \rho_f(\partial_t \bu^k,\bv)
     &\leq & C_1\|D(\bu^k)\|_{\Omega_f}\|D(\bv)\|_{\Omega_f} + C_2\|\bu^k\|_{\Gamma}\|\bv\|_{\Gamma} \nn \\
     &&+ C_3 (\|\partial_t \beeta^{k-1}\|_{\Gamma}
+ \|\bsigma_s^{k-1}\bn_s\|_{\Gamma})\|\bv\|_{\Gamma} \, ,    \label{hs2}
\end{eqnarray}
and, using the trace theorem,
 \begin{eqnarray}
     \rho_f(\partial_t \bu^k,\bv)
     &\leq & C_1\|D(\bu^k)\|_{\Omega_f}\|D(\bv)\|_{\Omega_f} + C_{2,T}\|\bu^k\|_{1, \Omega_f}\|\bv\|_{1, \Omega_f} \nn \\
     &&+ C_{3,T} (\|\partial_t \beeta^{k-1}\|_{\Gamma}
+ \|\bsigma_s^{k-1}\bn_s\|_{\Gamma})\|\bv\|_{1, \Omega_f}     \label{hs3}
\end{eqnarray}
for some constants $C_{2,T}, C_{3,T} > 0$.
Then Korn's inequality implies, for some constants $C_K>0$,
 \begin{eqnarray}
     \rho_f(\partial_t \bu^k,\bv)
     &\leq & C_{K}\left(\|D(\bu^k)\|_{\Omega_f} + \|\partial_t \beeta^{k-1}\|_{\Gamma}
+ \|\bsigma_s^{k-1}\bn_s\|_{\Gamma} \right)\|\nabla \bv\|_{\Omega_f} \, .    \label{hs}
\end{eqnarray}

Now using Poincar\'{e}-Friedrichs inequality, dividing both sides by $\norm{\nabla \bv}_{\Omega_f}$ and taking supremum over $\bv \in \textbf{V}$, we have, for some constant $\widehat{C} > 0$,  
\begin{equation*}\label{vs1pcovppp}
  \rho_f   \| \partial_t \bu^k\|_{\textbf{V}^{*}} \leq  \widehat{C} ( \,  \|D(\bu^k)\|_{\Omega_f}+ \|\partial_t \beeta^{k-1}\|_{\Gamma}
+ \|\bsigma_s^{k-1}\bn_s\|_{\Gamma}  \, ) \, .
 \end{equation*} 
%and  the right-hand side terms in (\ref{hs})
%are bounded using Cauchy-Schwarz inequality, the trace theorem, Korn's inequality, Poincar\'{e}-Friedrichs inequality.
%Then,
%dividing both sides by $\norm{\nabla \bv}_{\Omega_f}$ and taking supremum over $\bv \in \textbf{V}$, we have, for some constant $\widehat{C} > 0$,  
%\begin{equation*}\label{vs1pcovppp}
%  \rho_f   \| \partial_t \bu^k\|_{\textbf{V}^{*}} \leq  \widehat{C} ( \,  \|D(\bu^k)\|_{\Omega_f}+ \|\partial_t \beeta^{k-1}\|_{\Gamma}
%+ \|\bsigma_s^{k-1}\bn_s\|_{\Gamma}  \, ) \, .
% \end{equation*}
 The norm equivalence of $\|\cdot\|_{\textbf{X}^{*}}$ and $\|\cdot\|_{\textbf{V}^{*}}$ (see Lemma \ref{VXdualrelation})
%Lemma \ref{VXdualrelation} 
then implies, for some constant $C_{*}>0,$
 \begin{equation}\label{vs1pcovpp2}
     \rho_f  \| \partial_t \bu^k\|_{\textbf{X}^{*}} \leq
    C_{*}( \, \|D(\bu^k)\|_{\Omega_f}+ \|\partial_t \beeta^{k-1}\|_{\Gamma}
+ \|\bsigma_s^{k-1}\bn_s\|_{\Gamma}  \, ) \, .
 \end{equation}
 
To estimate a bound for $p^k$,  consider (\ref{fl1}) with $\bv \in \textbf{X}.$ We isolate the pressure term, divide by $\|\nabla \bv\|_{\Omega_f}$, take supremum over $\bv \in \textbf{X}$. Then the inf-sup condition (\ref{infsup}) 
and the estimate (\ref{vs1pcovpp2})
yield 
 \begin{equation*}\label{vs1pcovp3}
     \beta \|p^{k}\|_{\Omega_f} \, \leq \, ( 1+C_{*})( \,  \|D(\bu^k)\|_{\Omega_f}+ \|\partial_t \beeta^{k-1}\|_{\Gamma}
+ \|\bsigma_s^{k-1}\bn_s\|_{\Gamma}  \, ) \, .
 \end{equation*}
 for some $\beta >0$.
 If we square both sides, integrate over the interval $(0,t)$ for a.e. $t\in (0,T)$, then using the trace theorem, 
 \begin{eqnarray}
\frac{\beta^2}{3( 1+C_{*})^2}\int_0^{t} \|p^{k}\|^2_{\Omega_f}  \; ds \, 
& \leq & \int_0^t ( \, \|D(\bu^k)\|^2_{\Omega_f}+ \|\partial_t \beeta^{k-1}\|_{1,\Omega_s}\|\partial_t \beeta^{k-1}\|_{\Omega_s}
\nn \\ & &  + \|\bsigma_s^{k-1}\|^{1/2}_{1,\Omega_s}\|\bsigma_s^{k-1}\|^{1/2}_{\Omega_s}  \, ) \; ds\, .
 \end{eqnarray}
Now, $\|\partial_t \beeta^{k-1}\|_{1,\Omega_s},\|\bsigma_s^{k-1}\|_{1,\Omega_s} < \infty$ as $\partial_t \beeta^{k-1},\bsigma_s^{k-1} \in \bH^1(\Omega_s)$ for a.e. $t\in(0,T)$.  Hence the convergence of 
$D(\bu^k)$,  $\partial_t \beeta^k$, 
 $\bsigma_s^k$ implies that 
 $\displaystyle \int_0^{t}\|p^{k}\|^2_{\Omega_f} ds$ converges to 0 as 
 $k\rightarrow \infty$, i.e., 
 $p^k$ converges to 0 in $L^2(0,T; L^2(\Omega_f))$.
\end{proof}

%%%%%%%%%%%%%%%%%%%%%%%%%%%%%%

\section{Nonconforming time discretization and SWR algorithm}
\label{sec:semidiscrete}
 The global-in-time DD approach allows the use of separate time discretizations in each subdomains because the local problems are still time-dependent. On the space-time interface the transfer of information between different time grids is achieved through a suitable projection technique.

Consider $\tau_f$ be a partition of time interval $(0,T)$ into subintervals for the Stokes domain. Let $J_f^m := (t_f^{m-1},t_f^m]$ and step size $\Delta t_f^m := t_f^m-t_f^{m-1}$ for $m=1,.....,M_f$. The space of piecewise constant functions in time on grid $\tau_f$, with values in $W=L^{2}(\Gamma)$, is denoted by $P_0(\tau_f,W)$:
$$P_0(\tau_f,W)=\{\phi:(0,T)\rightarrow W, \phi \, \mbox{is constant on} \, J_f^m \quad \forall m=1,....,M_f\}.$$
Similarly, we define $\tau_s,M_s,J_s^n$ and $ \Delta t_s^n$ for the structure domain. 
To exchange data on the space-time interface between different time grids, we introduce the $L^2$ projection $\Pi_{s,f}$ from $P_0(\tau_f,W)$ onto $P_0(\tau_s,W)$ \cite{THH}: 
$$\Pi_{s,f}(\phi)|_{J_s^n}=\frac{1}{|J_s^n|}\sum_{l=1}^{M_f}\int_{J_s^n\cap J_f^l}\phi. $$
The projection $\Pi_{f,s}$ from $P_0(\tau_s,W)$ onto $P_0(\tau_f,W)$ is also defined similarly. 
\begin{figure}[!h]
\begin{minipage}[b]{0.5\linewidth}
\centering
\tdplotsetmaincoords{70}{30}
\begin{tikzpicture}[tdplot_main_coords]
\coordinate (O) at (0,0,0);
%\fill[blue!50,opacity=0.5] (-3,0.5,1) -- (3,0.5,1) -- (3,3,1) -- (-3,3,1) -- cycle;
%\fill[blue!50,opacity=0.5] (-3,0.5,1) -- (3,0.5,1) -- (3,-3,1) -- (-3,-3,1) -- cycle;
\fill[blue!50,opacity=0.5] (1.5,-3.8,3) -- (1.5,0,3) -- (0,0,3) -- (0,-3.8,3) -- cycle;
\fill[blue!50,opacity=0.5] (1.5,-3.8,3) -- (1.5,0,3) -- (1.5,0,0) -- (1.5,-3.8,0) -- cycle;
\fill[blue!50,opacity=0.5] (1.5,-3.8,3) -- (1.5,-3.8,0) -- (0,-3.8,0) -- (0,-3.8,3) -- cycle;

\fill[magenta!50,opacity=0.5] (3,-3.8,3) -- (3,0,3) -- (1.5,0,3) -- (1.5,-3.8,3) -- cycle;
\fill[magenta!50,opacity=0.5] (3,-3.8,3) -- (3,0,3) -- (3,0,0) -- (3,-3.8,0) -- cycle;
\fill[magenta!50,opacity=0.5] (3,-3.8,3) -- (3,-3.8,0) -- (1.5,-3.8,0) -- (1.5,-3.8,3) -- cycle;

    \draw[->] (O) --++ (3.5,0,0) node[below] {$x$};
    \draw[->] (O) --++ (0,-4.5,0) node[below] {$y$};
    \draw[->] (O) --++ (0,0,3.5) node[right] {$t$};
    \node[] at (0.75,-1.7,0) {$ \textcolor{blue}{\Omega_f} $};
    \node[] at (2.4,-1.7,0) {$ \textcolor{red}{\Omega_s} $};

    \draw[dashed] (1.5,-3.8,0)-- (1.5,0,0);
    \draw[dashed] (1.5,0,0)-- (1.5,0,3);
    
    \draw[-] (0,0,3)-- (0,-3.8,3);
    \draw[-] (1.5,0,3)-- (1.5,-3.8,3);
    \draw[-] (3,0,3)-- (3,-3.8,3);
        
    \draw[-] (1.5,-3.8,3)-- (1.5,-3.8,0);
    \draw[-] (1.5,-3.8,0)-- (3,-3.8,0);
    
    \draw[-] (1.5,-3.8,3)-- (3,-3.8,3);
    \draw[-] (1.5,0,3)-- (3,0,3);
    \draw[-] (0,0,3)-- (1.5,0,3);
    \draw[-] (0,-3.8,3)-- (1.5,-3.8,3);
    \draw[-] (0,-3.8,0)-- (1.5,-3.8,0);
    
    \draw[-] (3,-3.8,0)-- (3,0,0);

    \draw[-] (0,-3.8,0)-- (0,-3.8,3);
    \draw[-] (3,-3.8,0)-- (3,-3.8,3);
    \draw[-] (3,0,0)-- (3,0,3);
    
\end{tikzpicture}
\caption{The fluid and structure domains}
\vspace{1ex}
\end{minipage}
\hfill
\begin{minipage}[b]{0.5\linewidth}
    \centering
\tdplotsetmaincoords{70}{30}
\begin{tikzpicture}[tdplot_main_coords]
\coordinate (O) at (0,0,0);
%\fill[blue!50,opacity=0.5] (-3,0.5,1) -- (3,0.5,1) -- (3,3,1) -- (-3,3,1) -- cycle;
%\fill[blue!50,opacity=0.5] (-3,0.5,1) -- (3,0.5,1) -- (3,-3,1) -- (-3,-3,1) -- cycle;
\fill[blue!50,opacity=0.5] (1.5,-3.8,3) -- (1.5,0,3) -- (0,0,3) -- (0,-3.8,3) -- cycle;

\fill[blue!50,opacity=0.5] (1.5,-3.8,1.5) -- (1.5,0,1.5) -- (0,0,1.5) -- (0,-3.8,1.5) -- cycle;

\fill[blue!50,opacity=0.5] (1.5,-3.8,0) -- (1.5,0,0) -- (0,0,0) -- (0,-3.8,0) -- cycle;

\fill[magenta!50,opacity=0.5] (3,-3.8,3) -- (3,0,3) -- (1.5,0,3) -- (1.5,-3.8,3) -- cycle;

\fill[magenta!50,opacity=0.5] (3,-3.8,0) -- (3,0,0) -- (1.5,0,0) -- (1.5,-3.8,0) -- cycle;

    \draw[->] (O) --++ (3.5,0,0) node[below] {$x$};
    \draw[->] (O) --++ (0,-4.5,0) node[below] {$y$};
    \draw[->] (O) --++ (0,0,3.5) node[right] {$t$};
    \draw[dashed] (1.5,-3.8,0)-- (1.5,0,0);
    \draw[dashed] (1.5,0,0)-- (1.5,0,3);
    \node[] at (0.75,-1.7,0) {$ \textcolor{blue}{\Omega_f} $};
    \node[] at (2.4,-1.7,0) {$ \textcolor{red}{\Omega_s} $};

     \draw[-] (0,0,3)-- (0,-3.8,3);
    \draw[-] (1.5,0,3)-- (1.5,-3.8,3);
    \draw[-] (3,0,3)-- (3,-3.8,3);
        
    \draw[-] (1.5,-3.8,3)-- (1.5,-3.8,0);
    \draw[-] (1.5,-3.8,0)-- (3,-3.8,0);
    
    \draw[-] (1.5,-3.8,3)-- (3,-3.8,3);
    \draw[-] (1.5,0,3)-- (3,0,3);
    \draw[-] (0,0,3)-- (1.5,0,3);
    \draw[-] (0,-3.8,3)-- (1.5,-3.8,3);
    \draw[-] (0,-3.8,0)-- (1.5,-3.8,0);
    
    \draw[-] (3,-3.8,0)-- (3,0,0);

    \draw[-] (0,-3.8,0)-- (0,-3.8,3);
    \draw[-] (3,-3.8,0)-- (3,-3.8,3);
    \draw[-] (3,0,0)-- (3,0,3);
    
\end{tikzpicture}
\caption{Nonconforming time grids}
\vspace{1ex}
\end{minipage}    

\centering
\begin{minipage}[b]{0.5\linewidth}
\centering
\begin{tikzpicture}[scale=0.5]
\draw[thick, <-] (0,7)--(0,0);
\draw[thick, ->] (11,0)--(12,0);

\draw[thick, blue] (0,0)--(5,0);
\draw[thick, blue] (0,1.2)--(5,1.2);
\draw[thick, blue] (0,2.4)--(5,2.4);
\draw[thick, blue] (0,3.6)--(5,3.6);
\draw[thick, blue] (0,4.8)--(5,4.8);
\draw[thick, blue] (0,6)--(5,6);

\draw[thick, orange] (5,0)--(5,6);

\draw[thick, red] (5,0)--(11,0);
\draw[thick, red] (5,1.5)--(11,1.5);
\draw[thick, red] (5,3)--(11,3);
\draw[thick, red] (5,4.5)--(11,4.5);
\draw[thick, red] (5,6)--(11,6);

\draw[thick, -] (11,0)--(11,6);

\node [below] at (12,0){$ x $};
\node[left] at (0,7) {$ \large{t} $};
\node[left] at (0,6) {$ \large{T} $};
\node[left] at (0,0) {$ 0 $};

\draw[thick, <->] (-0.25,2.4)--(-0.25,3.6);
\node[left, blue] at (-0.25,3) {$ \Delta t_f^m $};

\draw[thick, <->] (11.25,3)--(11.25,4.5);
\node[right, red] at (11.25,3.75) {$ \Delta t_s^n $};

\node [below, blue] at (2.5,0){$ \Omega_f $};
\node [below, red] at (8,0){$ \Omega_s $};
\end{tikzpicture}\\
\caption{Projection of nonconforming time grids in two dimensions}
\label{fig:fig2}
\vspace{1ex}
\end{minipage}  
\end{figure}

We discretize the FSI system in time using an additional variable $\dot \beeta \in \bSigma$, representing $\partial_t \beeta$.
 Using the backward Euler method, the semi-discrete FSI system with Robin transmission conditions \eqref{R1} and  \eqref{R2} is given by: for $m=1,.....,M_f$
\begin{eqnarray} 
& &   \rho_f(\bu^{m}-\bu^{m-1}) + \Delta t_f^m  ( - 2\nu_f  \nabla \cdot D(\bu^{m}) + \nabla p^{m} I)\,=\,\bf_f^m  \quad \mbox{in } \Omega_f \,,  \label{fsi_stokes_dis_1}  \\
& &    \nabla \cdot \bu^{m}\,=\,0 \quad \mbox{in } \Omega_f \,,  \label{fsi_stokes_dis_2}
  \\
& &  \Delta t_f^m \left(\alpha_f \bu^m + \bsigma_f^m\bn_f\right) \,=
\int_{J_f^m}\Pi_{f,s}\left(\alpha_f \dot \beeta - \bsigma_s\bn_s\right) \, dt \; \mbox{on } \Gamma, \label{fsi_stokes_dis_3}
 \end{eqnarray}
  and for $n=1,.....,M_s$
 \begin{eqnarray}
& &\rho_s (\dot \beeta^n - \dot \beeta^{n-1}) - 2 \nu_s \Delta t_s^n \, \nabla \cdot D(\beeta^n)  - \lambda \Delta t_s^n \nabla (\nabla \cdot \beeta^n) =  \bbf_s^n  \quad \mbox{in } \Omega_s \,, \label{fsi_str_dis_1}\\
& &  (\beeta^n-\beeta^{n-1}) - {\Delta t_s^n} \dot \beeta^n = \b0   \quad \mbox{in } \Omega_s
\label{fsi_str_dis_2}\\
& & \Delta t_s^n \left(-\alpha_s\dot \beeta^{n} -\bsigma^n_s\bn_s\right)\,=\,\int_{J_s^n}\Pi_{s,f}\left(-\alpha_s \bu + \bsigma_f\bn_f\right)dt \quad \mbox{on } \Gamma, \label{fsi_str_dis_3}
\end{eqnarray}
where $(\bu,p)= (\bu^m,p^m)_{m=1}^{M_f}$ satisfies the boundary condition (\ref{fluidbd}) and the initial condition (\ref{fluidin}), and $(\beeta, \dot \beeta) = (\beeta^n, \dot \beeta^n)_{n=1}^{M_s}$ satisfies the boundary condition (\ref{porobd}) and the initial conditions (\ref{poroin1}), (\ref{poroin}). 

Next, we present the semi-discrete SWR algorithm and prove the convergence of the iterations.
Consider the following algorithm. In the $k$th iteration step, solve 
\begin{eqnarray} 
& &    \rho_f(\bu^{k,m}-\bu^{k,m-1}) + \Delta t_f^m  ( - 2\nu_f  \nabla \cdot D(\bu^{k,m}) + \nabla p^{k,m} I)\,=\,\bf_f^m  \quad \mbox{in } \Omega_f \,,  \label{fsi_dis_Stokes_1}  
\\
& &  \nabla \cdot \bu^{k,m}\,=\,0 \quad \mbox{in } \Omega_f \,,  \label{fsi_dis_stokes_2}
  \\
& &  \Delta t_f^m \left(\alpha_f \bu^{k,m} + \bsigma_f^{k,m}\bn_f\right) \,=
\int_{J_f^m}\Pi_{f,s}\left(\alpha_f \dot \beeta^{k-1} - \bsigma_s^{k-1} \bn_s\right) \, dt \; \mbox{on } \Gamma, \label{fsi_dis_stokes_3}
 \end{eqnarray}
 for $(\bu^{k,m},p^{k,m})$ satisfying \eqref{fluidbd} and \eqref{fluidin}, where $\bu^{k,0} = \bu^{0}$, $\bu^{k,m}:=\bu^{k}|_{J_f^m},p^{k,m}:=p^{k}|_{J_f^m}$ for $m=1,.....,M_f$,
 and
 \begin{eqnarray}
& &\rho_s (\dot \beeta^{k,n} - \dot \beeta^{k,n-1}) - 2 \nu_s \Delta t_s^n \, \nabla \cdot D(\beeta^{k,n})  - \lambda \Delta t_s^n \nabla (\nabla \cdot \beeta^{k,n}) =  \bbf_s^n  \quad \mbox{in } \Omega_s \,, \,\label{fsi_dis_str_1} \\ 
& &  (\beeta^{k,n}-\beeta^{k,n-1}) - {\Delta t_s^n} \dot \beeta^{k,n} = \b0   \quad \mbox{in } \Omega_s,
\label{fsi_dis_str_2}
\\
& & \Delta t_s^n \left(-\alpha_s \dot \beeta^{k,n} -\bsigma^{k,n}_s\bn_s\right)\,=\,\int_{J_s^n}\Pi_{s,f}\left(-\alpha_s \bu^{k-1} + \bsigma^{k-1}_f\bn_f\right)dt \quad \mbox{on } \Gamma \label{fsi_dis_str_3}
\end{eqnarray}
for $(\beeta^{k,n}, \dot\beeta^{k,n})$ satisfying \eqref{porobd}-\eqref{poroin}, where $\beeta^{k,0}=\beeta^{0}$, $\beeta^{k,n}:=\beeta^{k}|_{J_s^n}$\,
$\dot \beeta^{k,0}=\bar{\beeta}^{0}$, $\dot \beeta^{k,n}:=\dot \beeta^{k}|_{J_s^n}$ for $n=1,.....,M_s.$ 
In the next theorem 
  we show that the  weak solution to  \eqref{fsi_dis_Stokes_1}-\eqref{fsi_dis_str_3}
  converges to the weak solution of
 \eqref{fsi_stokes_dis_1}-\eqref{fsi_str_dis_2}  as $k\rightarrow \infty$.
\begin{remark}
The proposed method allows varying time step sizes across subdomains, with interface conditions on nonconforming time grids enforced through the $L^2$ projection. Being fully implicit in time, the method supports the use of large time step sizes while maintaining stability.
\end{remark}
 \begin{theorem}
Suppose the initial guess  $(\bu^0, p^0, \beeta^0,\dot \beeta^0)$ is chosen such that the Robin-Robin conditions \eqref{fsi_dis_stokes_3} and
\eqref{fsi_dis_str_3} are well-defined in $\textbf{L}^2({\Gamma})$. Further, assume $\bsigma_s \in \bH^1(\Omega_s)$ and the condition that $\alpha_f=\alpha_s$, then the weak formulation \eqref{fsi_dis_Stokes_1}-\eqref{fsi_dis_str_3} defines a unique sequence of iterates 
 $$(\bu^k, p^k, \beeta^k, \dot \beeta^k) \in P_0(\tau_f, \textbf{X}) \times  P_0(\tau_f,Q)  \times P_0(\tau_s, \bSigma)\times P_0(\tau_s, \bSigma)$$
 that converges to the weak solution of  \eqref{fsi_stokes_dis_1}-\eqref{fsi_str_dis_3}.
 \label{theo2discrete}
 \end{theorem}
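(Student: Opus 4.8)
The plan is to reproduce the energy argument of \Cref{theo1} at the semi-discrete level, the new ingredient being the $L^2$-contractivity of the time projections $\Pi_{f,s},\Pi_{s,f}$, and to use the hypothesis $\alpha_f=\alpha_s=:\alpha$ to obtain a \emph{Gronwall-free} telescoping in the iteration index. First I would settle well-posedness (hence uniqueness) of each iterate. For fixed $k$ and each fluid slab, \eqref{fsi_dis_Stokes_1}-\eqref{fsi_dis_stokes_3} is a Stokes problem with a Robin datum whose bilinear form $\rho_f(\bu,\bv)+\Delta t_f^m\,2\nu_f(D(\bu),D(\bv))+\Delta t_f^m\,\alpha_f(\bu,\bv)_{\Gamma}$ is coercive on $\bX$ by Korn and Poincar\'e--Friedrichs, while $(\bX,Q)$ satisfies \eqref{infsup}; the structure slab \eqref{fsi_dis_str_1}-\eqref{fsi_dis_str_3}, after eliminating $\dot\beeta^{k,n}$ through \eqref{fsi_dis_str_2}, is a coercive problem for $\beeta^{k,n}$. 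Each iterate thus exists and is unique, and by linearity it remains to show that the iterates generated with $\bf_f=\bf_s=\textbf{0}$ and vanishing initial data converge to $\textbf{0}$, which is equivalent to convergence of the full iterates to the solution of \eqref{fsi_stokes_dis_1}-\eqref{fsi_str_dis_3}.

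Next I would test the fluid slab with $\bv=\bu^{k,m}$, using $(\bu^{k,m}-\bu^{k,m-1},\bu^{k,m})=\tfrac12(\|\bu^{k,m}\|^2-\|\bu^{k,m-1}\|^2+\|\bu^{k,m}-\bu^{k,m-1}\|^2)$ so that the $\rho_f$ term telescopes in $m$. Because $\alpha_f=\alpha_s$, identity \eqref{sq1} reduces to $4\alpha(\bsigma_f^k\bn_f)(\bu^k)=(\bsigma_f^k\bn_f+\alpha\bu^k)^2-(\bsigma_f^k\bn_f-\alpha\bu^k)^2$ with no residual $(\bu^k)^2$ boundary term. Inserting the discrete Robin condition \eqref{fsi_dis_stokes_3}, the incoming square equals $\|\Pi_{f,s}(\alpha\dot\beeta^{k-1}-\bsigma_s^{k-1}\bn_s)\|_\Gamma^2$; summing over $m$ and invoking $\|\Pi_{f,s}\phi\|_{L^2(0,T;L^2(\Gamma))}\le\|\phi\|_{L^2(0,T;L^2(\Gamma))}$ bounds it by the structure's outgoing quantity $\int_0^T\|-\bsigma_s^{k-1}\bn_s+\alpha\dot\beeta^{k-1}\|_\Gamma^2\,ds$. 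An identical treatment of the structure slab, tested with $\bxi=\dot\beeta^{k,n}$ and using \eqref{fsi_dis_str_2} to telescope the elastic and dilatation energies jointly with \eqref{sq21}, gives the companion estimate in which the incoming square is controlled through $\Pi_{s,f}$ by the fluid's outgoing quantity $\int_0^T\|\bsigma_f^{k-1}\bn_f-\alpha\bu^{k-1}\|_\Gamma^2\,ds$.

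Setting the discrete energy $E^k$ (kinetic, dissipated, and elastic parts, including the jump terms $\|\bu^{k,m}-\bu^{k,m-1}\|^2$ and $\|D(\beeta^{k,n})-D(\beeta^{k,n-1})\|^2$) and the interface functional
\[
B^k:=\frac{1}{4\alpha}\int_0^T\!\Big(\|\bsigma_f^k\bn_f-\alpha\bu^k\|_\Gamma^2+\|-\bsigma_s^k\bn_s+\alpha\dot\beeta^k\|_\Gamma^2\Big)\,ds,
\]
the two slab estimates add to the clean recursion $E^k+B^k\le B^{k-1}$, with no Gronwall remainder precisely because $\alpha_f=\alpha_s$ removed the cross terms. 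Summing over $k=1,\dots,K$ yields $\sum_{k=1}^K E^k+B^K\le B^0$, so $\sum_{k\ge1}E^k\le B^0<\infty$ and hence $E^k\to0$; this delivers convergence of $\bu^k$, $D(\bu^k)$, $\dot\beeta^k$, $D(\beeta^k)$, $\nabla\cdot\beeta^k$ (and, via Poincar\'e--Friedrichs, of $\beeta^k$) to $\textbf{0}$ in the stated norms. For the pressure I would argue as in \Cref{theo1}: testing the fluid slab with $\bv\in\bV$ bounds the discrete difference $\rho_f(\bu^{k,m}-\bu^{k,m-1})/\Delta t_f^m$ in $\bV^*$ by $\|D(\bu^k)\|_{\Omega_f}+\|\dot\beeta^{k-1}\|_\Gamma+\|\bsigma_s^{k-1}\bn_s\|_\Gamma$, pass to $\bX^*$ through \Cref{VXdualrelation} (using $\nabla\cdot\bu\in C^0(\Omega_f)$), and recover $\|p^k\|_{\Omega_f}$ from \eqref{infsup}; the hypothesis $\bsigma_s\in\bH^1(\Omega_s)$ controls the interface contributions after integration in time, giving $p^k\to\textbf{0}$ in $L^2(0,T;Q)$.

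The main obstacle I anticipate is the interface bookkeeping across nonconforming grids: one must check that the incoming datum of each subproblem is \emph{exactly} the $L^2$-projection of the other subproblem's outgoing combination, so that the contraction $\|\Pi\cdot\|\le\|\cdot\|$ turns the slabwise equalities into the monotone inequalities $B^k\le B^{k-1}$ without producing mismatched boundary terms. This is where $\alpha_f=\alpha_s$ is essential: for $\alpha_f\neq\alpha_s$ the leftover $\tfrac{\alpha_s-\alpha_f}{2}$-terms from \eqref{sq1}-\eqref{sq21} would have to be absorbed by a trace/Young step followed by a discrete Gronwall inequality, which interacts awkwardly with the projection on distinct time grids, whereas with equality the recursion is directly summable.
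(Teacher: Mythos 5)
Your main energy recursion coincides with the paper's: test the fluid slab with $\bu^{k,m}$ and the structure slab with $\dot\beeta^{k,n}$, use \eqref{sq1}--\eqref{sq21} with $\alpha_f=\alpha_s$ so that no residual $(\alpha_s-\alpha_f)$ boundary term survives, telescope in the time index, insert the discrete Robin data, and use the $L^2$-contractivity of $\Pi_{f,s},\Pi_{s,f}$ to arrive at $E^k+B^k\le B^{k-1}$ and hence $\sum_k E^k\le B^0$. Your explanation of why Gronwall is unavailable and why $\alpha_f=\alpha_s$ is needed is exactly the paper's.

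There is, however, a genuine gap in what you extract from $\sum_k E^k<\infty$. After telescoping in $n$, the structure contribution to $E^k$ consists only of the \emph{terminal} quantities $\|\dot\beeta^{k,M_s}\|_{\Omega_s}^2$, $\|D(\beeta^{k,M_s})\|_{\Omega_s}^2$, $\|\nabla\cdot\beeta^{k,M_s}\|_{\Omega_s}^2$ --- unlike the fluid, whose viscous term accumulates as the time integral $\int_0^T\|D(\bu^k)\|_{\Omega_f}^2\,dt$ and therefore controls every slab $m$ through the piecewise constancy of $\bu^k$. Summability of $E^k$ thus yields convergence of the structure iterates only at $n=M_s$, and the intermediate levels cannot be recovered from $B^k$ either, since the recursion only makes $B^k$ monotone decreasing, not null. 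Yet convergence of $\beeta^{k,n}$, $\dot\beeta^{k,n}$, $D(\beeta^{k,n})$ for \emph{all} $n$ is both part of the stated conclusion (convergence in $P_0(\tau_s,\bSigma)$) and indispensable for your pressure step, which requires $\|\dot\beeta^{k-1}\|_{\Omega_s}$ and $\|\bsigma_s^{k-1}\|_{\Omega_s}$ to be small on every structure slab. The paper closes this hole with a backward-in-time recursion: it tests \eqref{fsi_dis_str_1} with $\dot\beeta^{k,n-1}$ and with $\beeta^{k,n}$, and uses \eqref{fsi_dis_str_2}, the trace theorem, and the assumed $\bH^1(\Omega_s)$ regularity of $\bsigma_s$ to propagate convergence from level $n$ down to level $n-1$, starting at $n=M_s$ (see \eqref{s1eq-1}--\eqref{s3eq-1}). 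You need to supply this (or an equivalent) step; note in particular that this is where the hypothesis $\bsigma_s\in\bH^1(\Omega_s)$ enters substantively, not only in the final pressure bound as your sketch suggests.
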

 \begin{proof}
 Since the equations are linear, we can set $\bf_f=\bf_s=\bu^{0}=\beeta^0=\dot \beeta^0=\b0$, and proceed to derive energy estimates following the proof of Theorem \ref{theo1}. The structure of the proof is similar to \Cref{theo1}, except that we do not use the Gronwall lemma due to the presence of projection operators.  
We multiply the equations 
\eqref{fsi_dis_Stokes_1} and \eqref{fsi_dis_stokes_2}
by $\bu^{k,m}$ and 
 $p^{k,m}$, respectively,  use \eqref{sq1}, and add the resulting equations together to obtain
 \begin{eqnarray}
 & & \rho_f (\bu^{k,m},\bu^{k,m})_{\Omega_f}-\rho_f(\bu^{k,m-1},\bu^{k,m})_{\Omega_f}
 +2\nu_{f}\Delta t_f^m \norm{D(\bu^{k,m})}_{\Omega_f}^2 \nn \\ && \hspace{.5in}
 +\frac{\Delta t_f^m}{2(\alpha_f + \alpha_s)}\norm{\bsigma_f^{k,m}\bn_f - \alpha_s \bu^{k,m}}^2_{\Gamma} \nn \\ && \hspace{.5in}
 \leq  \frac{\Delta t_f^m}{2(\alpha_f + \alpha_s)}\norm{\bsigma_f^{k,m}\bn_f + \alpha_f\bu^{k,m}}^2_{\Gamma} +\frac{\Delta t_f^m}{2}(\alpha_s - \alpha_f)\norm{\bu^{k,m}}^2_{\Gamma}.
\end{eqnarray}
By using Cauchy-Schwarz inequality and $\frac{1}{2}(a^2-b^2)\leq a^2-ab$, we can obtain
 \begin{eqnarray}\label{fsi_f}
 & & \frac{\rho_f}{2}\left(\|\bu^{k,m}\|_{\Omega_f}^2-\|\bu^{k,m-1}\|_{\Omega_f}^2\right)
 +2\nu_{f} \int_{J_f^m} \norm{D(\bu^{k})}_{\Omega_f}^2 dt \nn \\ & & \hspace{.5in}
 +\frac{1}{2(\alpha_f + \alpha_s)}\int_{J_f^m} \norm{\bsigma_f^{k}\bn_f - \alpha_s \bu^{k}}^2_{\Gamma} dt  \nn \\ & & \hspace{.5in}
 \leq  \frac{1}{2(\alpha_f + \alpha_s)}\int_{J_f^m} \norm{\bsigma_f^{k}\bn_f + \alpha_f\bu^{k}}^2_{\Gamma}dt +\frac{(\alpha_s -\alpha_f)}{2}\int_{J_f^m}\norm{\bu^{k}}^2_{\Gamma}dt.
\end{eqnarray}
Next, multiply  \eqref{fsi_dis_str_1} by $\dot \beeta^{k,n}$, integrate over $\Omega_s$ and  use \eqref{fsi_dis_str_2} and \eqref{sq21} to have
\begin{eqnarray}\label{fsi_s}
& & \frac{\rho_s}{2}\left(\|\dot \beeta^{k,n}\|_{\Omega_s}^2-\|\dot \beeta^{k,n-1}\|_{\Omega_s}^2\right)
+ \nu_s \left(\norm{D(\beeta^{k,n})}^2_{\Omega_s}-\norm{D(\beeta^{k,n-1})}^2_{\Omega_s}\right) \nn \\ & & \hspace{.5in}
+ \frac{\lambda}{2} \left(\norm{\nabla \cdot \beeta^{k,n}}^2_{\Omega_s}-\norm{\nabla \cdot \beeta^{k,n-1}}^2_{\Omega_s}\right)
+ \frac{1}{2(\alpha_f + \alpha_s)} \int_{J_s^n}\norm{-\bsigma_s^{k}\bn_s + \alpha_f \dot \beeta^{k}}^2_{\Gamma}dt \nn \\ & & \hspace{.5in}
 \leq  \frac{1}{2(\alpha_f + \alpha_s)} \int_{J_s^n}\norm{-\bsigma_s^{k}\bn_s - \alpha_s \dot \beeta^{k}}^2_{\Gamma}dt
 -\frac{(\alpha_s-\alpha_f)}{2} \int_{J_s^n} \norm{\dot \beeta^{k}}^2_{\Gamma}dt.
\end{eqnarray}

To eliminate the last terms of equations \eqref{fsi_f} and \eqref{fsi_s}, we assume that $\alpha_f=\alpha_s$, since we cannot use Gronwall's lemma 
as for the continuous case due to the global-in-time projection $\Pi_{f,s}$ and $\Pi_{s,f}$ \cite{HJJ13}.  By summing these equations over the subintervals in $(0,t_f^m]$ and $(0,t_s^n]$, respectively, we obtain
\begin{eqnarray}\label{fsi_f1}
 \frac{\rho_f}{2}\|\bu^{k,m}\|_{\Omega_f}^2
 +2\nu_{f} \int_{0}^{t_f^m} \norm{D(\bu^{k})}_{\Omega_f}^2 dt +\frac{1}{2(\alpha_f + \alpha_s)}\int_{0}^{t_f^m} \norm{\bsigma_f^{k}\bn_f - \alpha_s \bu^{k}}^2_{\Gamma} dt  \nn \\ \hspace{.5in}
 \leq  \frac{1}{2(\alpha_f + \alpha_s)}\int_{0}^{t_f^m} \norm{\bsigma_f^{k}\bn_f + \alpha_f\bu^{k}}^2_{\Gamma}dt,
\end{eqnarray}
and
\begin{eqnarray}\label{fsi_s1}
& & \frac{\rho_s}{2}\|\dot \beeta^{k,n}\|_{\Omega_s}^2 + \nu_s \norm{D(\beeta^{k,n})}^2_{\Omega_s} + \frac{\lambda}{2} \norm{\nabla \cdot \beeta^{k,n}}^2_{\Omega_s}  \nn \\ & & \hspace{.5in}
+ \frac{1}{2(\alpha_f + \alpha_s)} \int_{0}^{t_s^n}\norm{-\bsigma_s^{k}\bn_s + \alpha_f \dot \beeta^{k}}^2_{\Gamma}dt \nn \\ & & \hspace{.5in}
 \leq  \frac{1}{2(\alpha_f + \alpha_s)} \int_{0}^{t_s^n}\norm{-\bsigma_s^{k}\bn_s - \alpha_s \dot \beeta^{k}}^2_{\Gamma}dt.
\end{eqnarray}
We add \eqref{fsi_f1} and \eqref{fsi_s1}, apply the Robin conditions \eqref{fsi_dis_stokes_3} and \eqref{fsi_dis_str_3} and set $t_f^m=t_s^n=T$ to obtain the following.
\begin{eqnarray}\label{fsi_fs}
 & & \frac{\rho_f}{2}\|\bu^{k,M_f}\|_{\Omega_f}^2
 +2\nu_{f} \int_{0}^{T} \norm{D(\bu^{k})}_{\Omega_f}^2 dt \nn \\ & & \hspace{.5in}
  \frac{\rho_s}{2}\|\dot \beeta^{k,M_s}\|_{\Omega_s}^2 + \nu_s \norm{D(\beeta^{k,M_s})}^2_{\Omega_s} + \frac{\lambda}{2} \norm{\nabla \cdot \beeta^{k,M_s}}^2_{\Omega_s}
\nn \\ & & \hspace{.5in}
+\frac{1}{2(\alpha_f + \alpha_s)}\int_{0}^{T} \norm{\bsigma_f^{k}\bn_f - \alpha_s \bu^{k}}^2_{\Gamma} dt  + \frac{1}{2(\alpha_f + \alpha_s)} \int_{0}^{T}\norm{-\bsigma_s^{k}\bn_s + \alpha_f \dot \beeta^{k}}^2_{\Gamma}dt \nn \\ & & \hspace{.5in}
 \leq  \frac{1}{2(\alpha_f + \alpha_s)}\int_{0}^{T} \norm{\bsigma_f^{k}\bn_f + \alpha_f\bu^{k}}^2_{\Gamma}dt+\frac{1}{2(\alpha_f + \alpha_s)} \int_{0}^{T}\norm{-\bsigma_s^{k}\bn_s - \alpha_s \dot \beeta^{k}}^2_{\Gamma}dt \nn \\ & & \hspace{.5in}
 \leq \frac{1}{2(\alpha_f + \alpha_s)}\int_{0}^{T} \norm{\Pi_{s,f}(\bsigma_f^{k-1}\bn_f - \alpha_s \bu^{k-1})}^2_{\Gamma} dt \nn \\ & & \hspace{.5in}
 + \frac{1}{2(\alpha_f + \alpha_s)} \int_{0}^{T}\norm{\Pi_{f,s}(-\bsigma_s^{k-1}\bn_s + \alpha_f \dot \beeta^{k-1})}^2_{\Gamma}dt \nn \\ & & \hspace{.5in}
\leq \frac{1}{2(\alpha_f + \alpha_s)}\int_{0}^{T} \norm{\bsigma_f^{k-1}\bn_f - \alpha_s \bu^{k-1}}^2_{\Gamma} dt \nn \\ & & \hspace{.5in}
 + \frac{1}{2(\alpha_f + \alpha_s)} \int_{0}^{T}\norm{-\bsigma_s^{k-1}\bn_s + \alpha_f \dot \beeta^{k-1}}^2_{\Gamma}dt \,.
% \nn \\ & & \hspace{.5in}
\end{eqnarray}
Then, for all $k>0$
\begin{eqnarray}
 & & \frac{\rho_f}{2}\|\bu^{k,M_f}\|_{\Omega_f}^2
 +2\nu_{f} \int_{0}^{T} \norm{D(\bu^{k})}_{\Omega_f}^2 dt +\frac{\rho_s}{2}\|\dot \beeta^{k,M_s}\|_{\Omega_s}^2 + \nu_s \norm{D(\beeta^{k,M_s})}^2_{\Omega_s} \nn \\ & & \hspace{.5in}
 + \frac{\lambda}{2} \norm{\nabla \cdot \beeta^{k,M_s}}^2_{\Omega_s}
+ B^k \leq B^{k-1},
\end{eqnarray}
where 
$$B^k =\frac{1}{2(\alpha_f + \alpha_s)}\int_{0}^{T} \norm{\bsigma_f^{k}\bn_f - \alpha_s \bu^{k}}^2_{\Gamma} dt  + \frac{1}{2(\alpha_f + \alpha_s)} \int_{0}^{T}\norm{-\bsigma_s^{k}\bn_s + \alpha_f \dot \beeta^{k}}^2_{\Gamma}dt \, .$$
 By summing over the iterates $k$, we conclude that $\|\bu^{k,M_f}\|_{\Omega_f}, \int_{0}^{T} \|D(\bu^k)\|_{\Omega_f}^2  dt, \|D(\beeta^{k,M_s})\|_{\Omega_s}  $,  $\norm{\nabla \cdot \beeta^{k,M_s}}_{\Omega_s}$ and  $ \|\dot \beeta^{k,M_s}\|_{\Omega_s}$ converge to $0$ as $k\rightarrow \infty$. This implies $\int_{0}^{t_f^m} \|D(\bu^k)\|_{\Omega_f}^2  dt$ converges to 0 as $k \rightarrow \infty$ for all $m=1,2, \cdot \cdot \cdot, M_f$, and also $\|D(\bu^{k,m})\|_{\Omega_f}$ converges $0$ for all $m=1,2, \cdot \cdot \cdot, M_f$,
as $\bu^k \in P_0(\tau_f,\bX)$.
  Now, using Korn's inequality,  $\|\bu^{k,m}\|_{\Omega_f} \leq C_{PF1} \|D(\bu^{k,m})\|_{\Omega_f}$ and $\|\beeta^{k,M_s}\|_{\Omega_s} \leq C_{PF2} \|D(\beeta^{k,M_s})\|_{\Omega_s}$ for some constants $C_{PF1}, C_{PF2} > 0$, which implies $\|\bu^{k,m}\|_{\Omega_f}$ converges to 0 for all $m=1,2, \cdot \cdot \cdot, M_f$, and $\|\beeta^{k,M_s}\|_{\Omega_s}$ also converges to $0$ as $k\rightarrow \infty$.

Next, we show the convergence of 
$\beeta^{k,n}$ $\dot \beeta^{k,n}$ and $D(\beeta^{k,n})$ 
in the $L^2$ norm for all $n=1,2, \cdot \cdot \cdot, M_s$.
We multiply \eqref{fsi_dis_str_1} by $\dot \beeta^{k,n-1}$, integrate over $\Omega_s$, and use Cauchy-Schwarz inequality and the Trace theorem to obtain 
\begin{eqnarray}
\|\dot \beeta^{k,n-1}\|_{\Omega_s}^2& \leq &   C_{s1} \left(\|\dot \beeta^{k,n}\|_{\Omega_s}\|\dot \beeta^{k,n-1}\|_{\Omega_s}+ \|D(\beeta^{k,n})\|_{\Omega_s} \|D(\dot \beeta^{k,n-1}\|_{\Omega_s} \right. \nn \\
&& \left. + \|\bsigma_s^{k,n}\bn_s\|_{1,\Omega_s}^{1/2}\|\bsigma_s^{k,n}\bn_s\|_{\Omega_s}^{1/2}\|\dot \beeta^{k,n-1}\|^{1/2}_{1,\Omega_s}
\|\dot \beeta^{k,n-1}\|^{1/2}_{\Omega_s}\right),
\label{s1eq-1}
\end{eqnarray} 
for some constant $C_{s1} > 0$.
Similarly, from \eqref{fsi_dis_str_2}, for some constant $C_{s2}>0$,
\begin{equation}\label{s2eq-1}
    \|\beeta^{k,n-1}\|_{\Omega_s} \leq C_{s2}(\|\beeta^{k,n}\|_{\Omega_s}+\|\dot \beeta^{k,n}\|_{\Omega_s})
    \,.
\end{equation}
For $n=M_s$, $\|\bsigma_s^{k,n}\bn_s\|_{\Omega_s}$ in \eqref{s1eq-1} converges to $0$ as 
$k \rightarrow \infty$ by its definition and the convergence of $\|D(\beeta^{k,M_s})\|_{\Omega_s}$, hence the last term in \eqref{s1eq-1} converges to $0$ if $\bsigma_s^{k,n}\bn_s$ has $H^1$ regularity.  
Then,
\eqref{s1eq-1} and \eqref{s2eq-1}, together with the fact that $\|\dot \beeta^{k,M_s}\|_{\Omega_s}$ and $\|\beeta^{k,M_s}\|_{\Omega_s}$ converge to 0,  imply $\|\dot \beeta^{k,M_s-1}\|_{\Omega_s}$ and $\|\beeta^{k,M_s-1}\|_{\Omega_s}$ converge to 0 as $k \rightarrow \infty$.
Now, multiplying \eqref{fsi_dis_str_1} by $\beeta^{k,n}$, integrating over $\Omega_s$ and using Cauchy-Schwarz inequality, \eqref{fsi_dis_str_3} and the Trace theorem, 
\begin{eqnarray}
& & \|D(\beeta^{k,n})\|_{\Omega_s}^2 + \|\nabla \cdot \beeta^{k,n}\|_{\Omega_s}^2\,\leq \, C_{s3}\left(\|\dot \beeta^{k,n}\|_{\Omega_s}\|\beeta^{k,n}\|_{\Omega_s} + \|\dot \beeta^{k,n-1}\|_{\Omega_s}\|\beeta^{k,n}\|_{\Omega_s} \right. \nn \\
& &  \left.  \hspace{.8in} +\|\Pi_{s,f}|_{J^n_s} (\bu^{k-1} + \bsigma_f^{k-1}\bn_f) \|_{\Gamma} \|\beeta^{k,n}\|^{1/2}_{1,\Omega_s}
\|\beeta^{k,n}\|^{1/2}_{\Omega_s} \right)
\label{s3eq-1}
\end{eqnarray} 
for some $C_{s3}>0$.
Since $\bg_f \in \textbf{L}^2(\Gamma)$, the convergence of $\|\beeta^{k, M_s-1}\|_{\Omega_s}$ to $0$ implies that $\|D(\beeta^{k,M_s-1})\|_{\Omega_s}$ converges to 0. Therefore, in this way, we can show that  $\|\beeta^{k,n}\|_{\Omega_s}$,  $\|\dot \beeta^{k,n}\|_{\Omega_s}$ and $\|D(\beeta^{k,n})\|_{\Omega_s}$ converge to 0 as $k \rightarrow \infty$ for all $n=1,2,\ldots,M_s$. 

To establish the convergence of $p^{k,m}$, we follow a similar approach to the continuous case, and obtain the following result for some $\beta >0$:
\begin{equation*}
     \beta \|p^{k,m}\|_{\Omega_f} \, \leq \, ( 1+C_{*})( \,  \|D(\bu^{k,m})\|_{\Omega_f}+ \|\Pi_{f,s}\dot \beeta^{k-1,m}\|_{\Gamma}
+ \|\Pi_{f,s}\bsigma_s^{k-1,m}\bn_s\|_{\Gamma}  \, ) \, .
 \end{equation*}
Squaring both sides and integrating over the interval $(0,t_f^m]$, we get, for all $m$
 \begin{eqnarray}\label{abc}
     \frac{\beta^2}{3( 1+C_{*})} \int_0^{t_f^m} \|p^{k}\|^2_{\Omega_f}  \; dt \, & & \leq \, \int_0^{t_f^m} \, \|D(\bu^k)\|^2_{\Omega_f} ds + \int_0^{T}\|\dot \beeta^{k-1}\|_{1,\Omega_s}\|\dot \beeta^{k-1}\|_{\Omega_s} ds \nn \\
&& + \int_0^{T} \|\bsigma_s^{k-1}\|_{1,\Omega_s}\|\bsigma_s^{k-1}\|_{\Omega_s}  \, \; ds\, .
 \end{eqnarray}
The last term in \eqref{abc} converges to $0$ by the regularity assumption for  $\bsigma_s$ and the convergence of $\|D(\beeta^{k,n})\|_{\Omega_s}$ for all $n=1,2,\ldots,M_s$. 
Then, the convergence of $\int_0^{t_f^m} \|p^{k}\|_{\Omega_f}^2 dt$ 
to $0$ as $k \rightarrow \infty$ follows from the convergence of  $\|D(\bu^{k,m})\|^2_{\Omega_f}$ and 
$\|\dot \beeta^{k,n}\|_{\Omega_s}$ for all $m$ and $n$. Finally, 
we have that $\|p^{k,m}\|_{\Omega_f}$ converges to $0$ for all $ m=1,2,\ldots,M_f$, 
as $p^k \in P_0(\tau_f,Q)$.
\end{proof}

\section{Numerical Examples}\label{sec:numericalresults}
We provide two numerical examples to demonstrate the effectiveness of our proposed methods.  The first example is a manufactured problem with a known solution, which we use to assess the accuracy and efficiency of the methods. The second example is a benchmark problem from the field of hemodynamics that has been previously considered in \cite{R14, R10}.
For both examples, we use GMRES
to achieve fast convergence when solving the interface problem \eqref{sp1} or \eqref{ifprob_weak}.   
The SWR algorithm analyzed in Section \ref{sec:semidiscrete} is a Jacobi-type iterative method, thus we expect fast convergence to be achieved by GMRES. No preconditioner is considered for the interface problems, and all computations are performed using freeFEM++ \cite{ff} in a sequential setting.

\subsection{Test 1 : With a Known Analytical Solution}
Consider an example with a known exact solution, where the fluid subdomain is $\Omega_f = (0, 1) \times (0, 1)$ and the structure subdomain is $\Omega_s = (0, 1) \times (1, 2)$. The interface between the two subdomains is given by $\Gamma = \{(x, y): 0 < x < 1, y = 1\}$. The chosen exact solution is 
\begin{eqnarray*}
&& \bu  = \begin{pmatrix}
\cos{(x+t)}\sin{(y+t)}+\sin{(x+t)}\cos{(y+t)} \\
-\sin{(x+t)}\cos{(y+t)}-\cos{(x+t)}\sin{(y+t)}
\end{pmatrix} \\
&&  p  = 2\nu_f (\sin{(x+t)}\sin{(y+t)}-\cos{(x+t)}\cos{(y+t)}) +  2\nu_s \cos{(x+t)}\sin{(y+t)} \\
&& \beeta  = \begin{pmatrix}
\sin{(x+t)}\sin{(y+t)} \\
\cos{(x+t)}\cos{(y+t)}
\end{pmatrix}. 
\end{eqnarray*}
The constants, $\rho_s$, $\rho_f$, $\nu_s$, $\nu_f$, and $\lambda$, are set to unity. 
 For the Robin conditions \eqref{R1} and \eqref{R2} we choose  $\alpha_f=1$ and $\alpha_s=100$, and 
the tolerance
for GMRES is set to  $\epsilon = 10^{-7}$. 

We test the convergence of both methods in space using inf-sup stable Taylor-Hood elements $(\textbf{P}2, P1)$ for the fluid subproblem and $\textbf{P}2$ elements for the structure subproblem, along with nonconforming time grids. We then repeat the test using inf-sup stable MINI elements $(\textbf{P}1 + bubble, P1)$ \cite{miniexp} for the fluid and $\textbf{P}1$ element for the structure subproblem. In all numerical tests, subdomains are discretized with matching grid points on the interface. 
The unknown Lagrange multipliers are approximated on the common grid points along the interface using the same polynomial degrees for the displacement approximation.  

The errors at the final time $T=0.0025$ are presented in 
Tables  \ref{spspaceT}-\ref{SWRspace} for all variables,  with expected convergence rates shown. For Taylor-Hood elements we expect convergence of $O(h^2)$ and for MINI elements we expect convergence if $O(h)$ for velocity in $H_1$ norm and for pressure $O(h^2)$ in $L_2$ norm. To test convergence with respect to different time steps, we use $\Delta t_{\text{coarse}}$ to denote the coarse time step size and set the fine time step size to be $\Delta t_{\text{fine}}= \Delta t_{\text{coarse}}/2$.   Numerical tests are performed using three different types of time grids:
 \begin{enumerate}
     \item Coarse conforming time grids: $\Delta t_f = \Delta t_s = \Delta t_{\text{coarse}}$,
     \item Fine conforming time grids: $\Delta t_f = \Delta t_s = \Delta t_{\text{fine}}$,
     \item Nonconforming time grids: $\Delta t_f = \Delta t_{\text{coarse}}$ and $\Delta t_s = \Delta t_{\text{fine}}$.
 \end{enumerate}
First, the Steklov-Poincar\'{e} interface problem \eqref{sp1} is solved using
 Taylor-Hood and \textbf{P}2 elements with $h=\frac{1}{32}$  and the three types of time grids given above, with  $\Delta t_{\text{coarse}} \in \{0.2, 0.1, 0.05, 0.0025 \}$. Then the same test is repeated using MINI and \textbf{P}1 elements with $h=\frac{1}{64}$ and $\Delta t_{\text{coarse}} \in \{0.4, 0.2, 0.1, 0.05 \}$.
 %
 %for the Robin method and $\Delta t_{\text{coarse}} \in \{0.4, 0.2, 0.1, 0.05 \}$ for Steklov-Poincare method. 
 Figures  \ref{sptime1} and \ref{sptime1-MINI} demonstrate the first-order convergence of solutions, showing the errors at $T=0.2$ and $T=0.4$, respectively.
 For the nonconforming time grids, 
 $\Delta t_f = \Delta t_{\text{coarse}}$ and $\Delta t_s = \Delta t_{\text{fine}}$. Thus, as expected, 
 the fluid velocity and pressure errors for the nonconforming time grids are close to the errors of the conforming coarse grids, while the displacement errors are between the errors of the conforming fine and coarse grids. 
 We also solve the interface problem 
 \eqref{ifprob_weak} using 
 Taylor-Hood and \textbf{P}2 elements with the same condition as \eqref{sp1}.
 Figure \ref{sptime1-Robin} shows a result similar to the result obtained by the  Steklov-Poincar\'{e} method in Figure \ref{sptime1}. 
 
Next, we compare the computer running times for both methods using conforming and nonconforming time grids on a fixed mesh. Computer running times for  both methods are presented in Table \ref{spcpu1hk} and Table \ref{spcpu1hk2}. The tables show that the computer running times for the nonconforming cases are close to the conforming coarse cases than the conforming fine cases for both methods, demonstrating the efficiency of the proposed methods. 
 We also examine the convergence behavior of GMRES by various Robin parameters, $\alpha_f$ and $\alpha_s$.
 Table \ref{tabiter} presents the number of iterations for various $\alpha_s$ values when $\alpha_f$ is fixed to 1. The table indicates that 
 a higher $\alpha_s$ value yields faster convergence of GMRES iterations. However, in an additional test, we 
observe that the convergence of GMRES is not much affected by  $\alpha_f$ values.

\begin{table}[ht]
\centering % centering table
\begin{tabular}{l c c ccc} % creating 4 columns
\hline\hline % inserting double-line
 $h$ & & 1/4 & 1/8 & 1/16 & 1/32
\\ [0.5ex]
\hline 
 &$L^2$ error & 2.08e-04 & 2.17e-05 [3.26] & 2.75e-06 [2.98] & 3.77e-07 [2.87] \\[-1ex]
\raisebox{1.5ex}{$\textbf{u}$} &$H^1$ error
& 5.90e-03 & 1.23e-03 [2.26] & 3.02e-04 [2.02] & 7.47e-05 [2.02] \\[1ex]
\hline
% Entering 2nd row
$p$ &$L^2$ error & 5.21e-03 & 1.09e-03 [2.26] & 2.54e-04 [2.10] & 6.56e-05 [1.95] \\[1ex]
\hline
% Entering 3rd row
 &$L^2$ error & 2.21e-04 & 2.31e-05 [3.26] & 2.38e-06 [3.28] & 3.11e-07 [2.94] \\[-1ex]
\raisebox{1.5ex}{$\beeta$} &$H^1$ error
& 6.50e-03 & 1.39e-03 [2.22] & 3.00e-04 [2.21] & 7.52e-05 [2.00] \\[1ex]
\hline
% [1ex] adds vertical space
%%\hline % inserts single-line
\end{tabular}
\caption{Errors by the Steklov-Poincar\'{e} method using Taylor-Hood and \textbf{P}2 elements, $\Delta t_f=0.000025$ and $\Delta t_s=0.000050$.} 
\label{spspaceT}
\end{table}    
 
\begin{table}[ht]
\centering % centering table
\begin{tabular}{l c c ccc} % creating 4 columns
\hline\hline % inserting double-line
 $h$ & & 1/4 & 1/8 & 1/16 & 1/32
\\ [0.5ex]
\hline 
 &$L^2$ error & 9.03e-03 & 2.18e-03 [2.05] & 4.79e-04 [2.19] & 1.23e-04 [1.97] \\[-1ex]
\raisebox{1.5ex}{$\textbf{u}$} &$H^1$ error
& 1.96e-01 & 8.12e-02 [1.27] & 3.58e-02 [1.18] & 1.80e-02[1.00] \\[1ex]
\hline
% Entering 2nd row
$p$ &$L^2$ error & 5.73e-01 & 1.43e-01 [2.00] & 3.59e-02 [2.00] & 1.06e-02 [1.76] \\[1ex]
\hline
% Entering 3rd row
 &$L^2$ error & 6.37e-03 & 1.34e-03 [2.24] & 3.35e-04 [2.00] & 8.43e-05 [1.99] \\[-1ex]
\raisebox{1.5ex}{$\beeta$} &$H^1$ error
& 8.25e-02 & 3.71e-02 [1.15] & 1.86e-02 [1.00] & 9.38e-03 [0.99] \\[1ex]
\hline
% [1ex] adds vertical space
%%\hline % inserts single-line
\end{tabular}
\caption{Errors by the Steklov-Poincar\'{e} method using MINI and \textbf{P}1 elements, $\Delta t_f=0.000025$ and $\Delta t_s=0.000050$.} 
\label{tab:5MRT2}
\end{table}    

\begin{table}[ht]
\centering % centering table
\begin{tabular}{l c c ccc} % creating 4 columns
\hline\hline % inserting double-line
 $h$ & & 1/4 & 1/8 & 1/16 & 1/32
\\ [0.5ex]
\hline % inserts single-line
 &$L^2$ error & 2.10e-04 & 2.09e-05 [3.34] & 2.56e-06 [3.03] & 3.39e-07 [2.92] \\[-1ex]
\raisebox{1.5ex}{$\textbf{u}$} &$H^1$ error
& 5.95e-03 & 1.23e-03 [2.27] & 3.04e-04 [2.03] & 7.43e-05 [2.02] \\[1ex]
\hline
% Entering 2nd row
$p$ &$L^2$ error & 5.40e-03 & 1.02e-03 [2.40] & 2.46e-04 [2.05] & 6.45e-05 [1.93] \\[1ex]
\hline
% Entering 3rd row
 &$L^2$ error & 2.21e-04 & 2.31e-05 [3.26] & 2.38e-06 [3.28] & 3.10e-07 [2.94] \\[-1ex]
\raisebox{1.5ex}{$\beeta$} &$H^1$ error
& 6.50e-03 & 1.39e-03 [2.22] & 3.00e-04 [2.21] & 7.52e-05 [2.00] \\[1ex]
\hline
% [1ex] adds vertical space
%%\hline % inserts single-line
\end{tabular}
\caption{Errors by the Robin method for $(\alpha_f, \alpha_s) = (1, 100)$ using Taylor-Hood and \textbf{P}2 elements, $\Delta t_f=0.000050$, $\Delta t_s=0.000025$.}
\label{SWRspace}
\end{table}

\begin{figure}[!htb] 
    \begin{minipage}[b]{0.5\linewidth}
    \centering
    \includegraphics[scale=0.3]{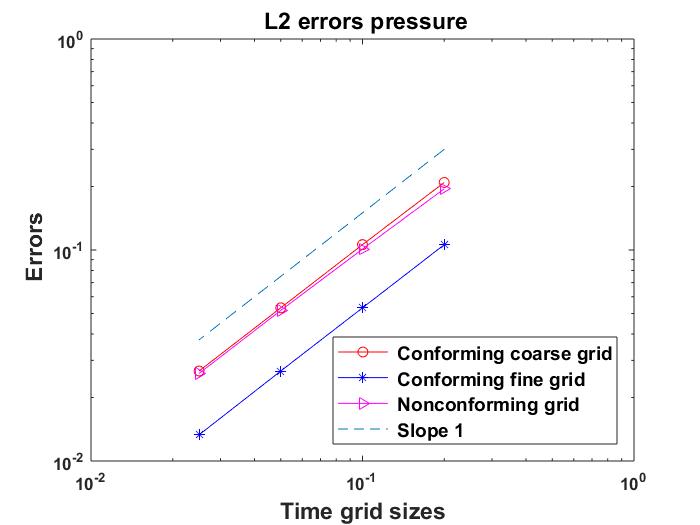} 
    \vspace{1ex}
  \end{minipage}
  \begin{minipage}[b]{0.5\linewidth}
    \centering
    \includegraphics[scale=0.3]{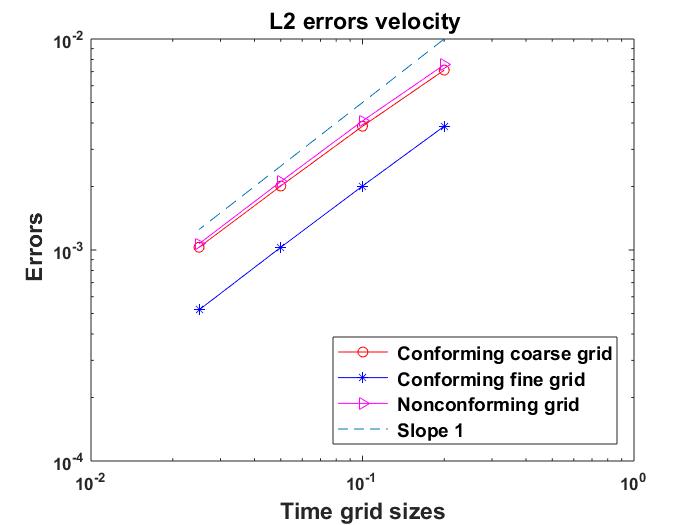} 
    \vspace{1ex}
  \end{minipage} 
  \begin{minipage}[b]{0.5\linewidth}
    \centering
    \includegraphics[scale=0.3]{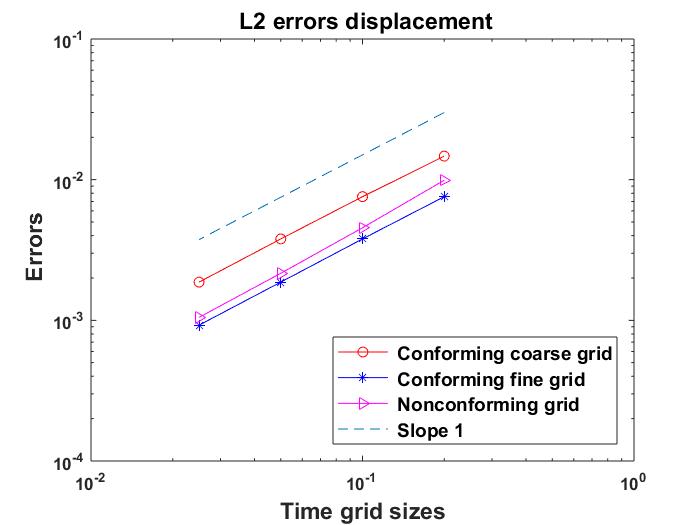} 
    \vspace{1ex}
  \end{minipage} 
    \begin{minipage}[b]{0.5\linewidth}
    \centering
    \includegraphics[scale=0.3]{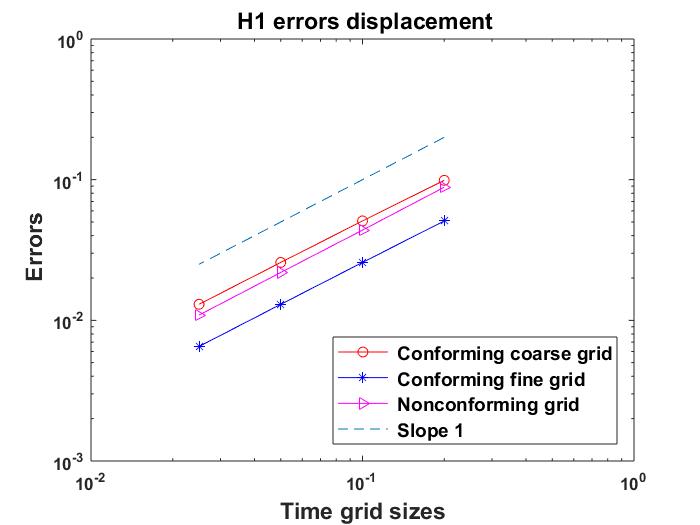} 
    \vspace{1ex}
  \end{minipage} 
  \caption{ Errors at $T=0.2$ by the Steklov-Poincar\'{e} method using Taylor-Hood and \textbf{P}2 elements with $h=1/32$.}
  \label{sptime1}
\end{figure}

\begin{figure}[!htb] 
    \begin{minipage}[b]{0.5\linewidth}
    \centering
    \includegraphics[scale=0.3]{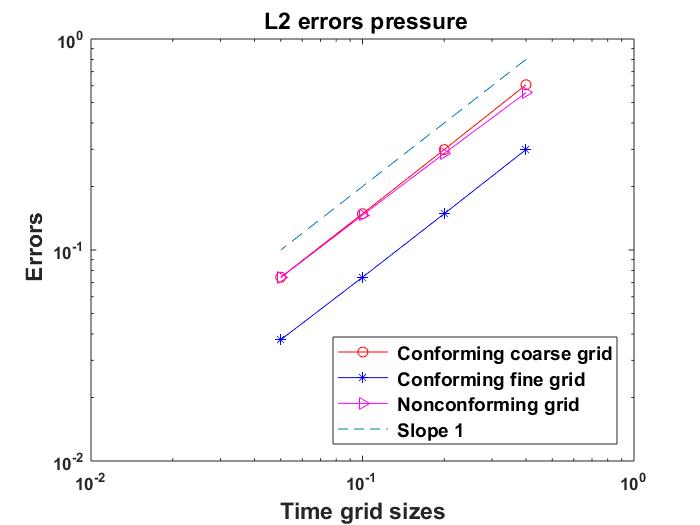} 
    \vspace{1ex}
  \end{minipage}
  \begin{minipage}[b]{0.5\linewidth}
    \centering
    \includegraphics[scale=0.3]{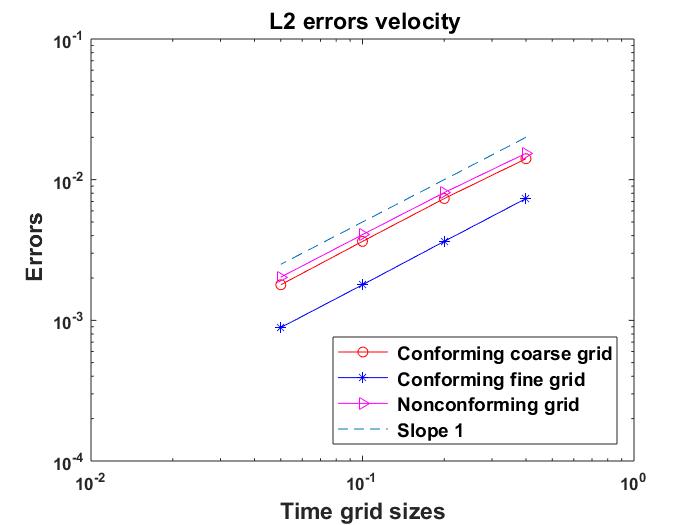} 
    \vspace{1ex}
  \end{minipage} 
  \begin{minipage}[b]{0.5\linewidth}
    \centering
    \includegraphics[scale=0.3]{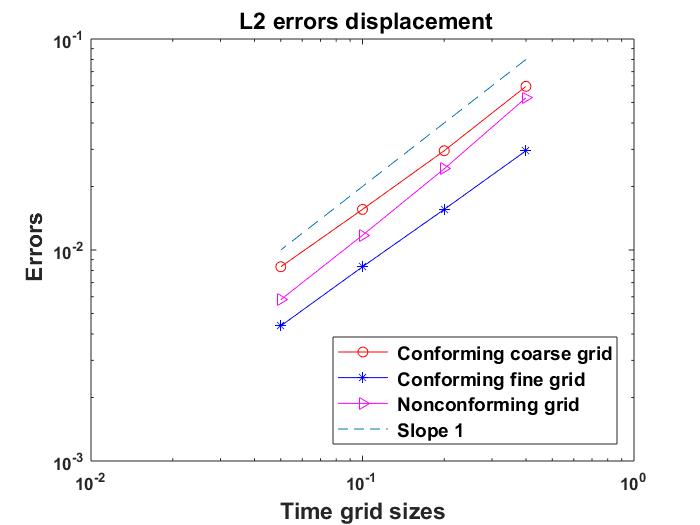} 
    \vspace{1ex}
  \end{minipage} 
    \begin{minipage}[b]{0.5\linewidth}
    \centering
    \includegraphics[scale=0.3]{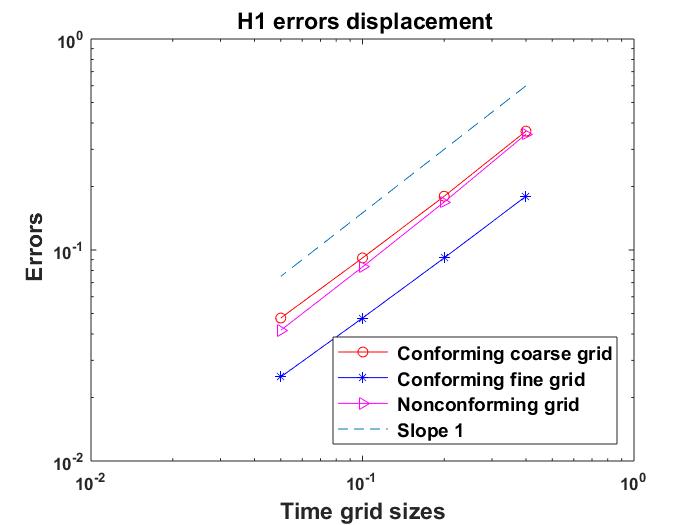} 
    \vspace{1ex}
  \end{minipage} 
  \caption{Errors at $T=0.4$ by the Steklov-Poincar\'{e} method using MINI and \textbf{P}1 elements with  $h=1/64$.}
   \label{sptime1-MINI}
\end{figure}

\begin{figure}[!htbp] 
    \begin{minipage}[b]{0.5\linewidth}
    \centering
    \includegraphics[scale=0.3]{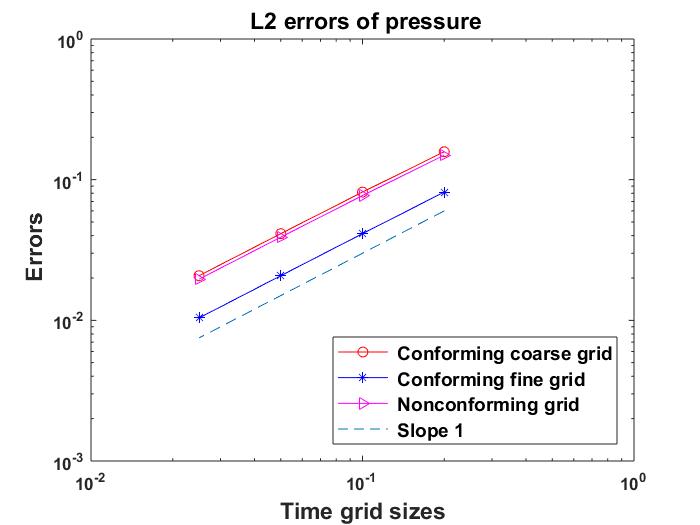} 
    \vspace{1ex}
  \end{minipage}
  \begin{minipage}[b]{0.5\linewidth}
    \centering
    \includegraphics[scale=0.3]{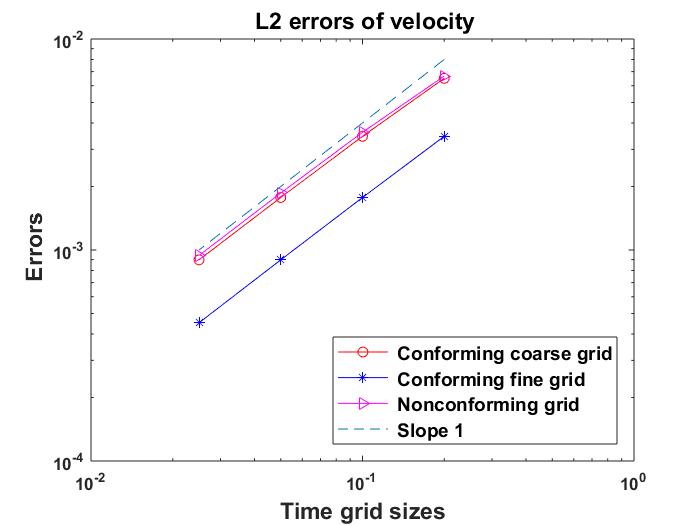} 
    \vspace{1ex}
  \end{minipage} 
  \begin{minipage}[b]{0.5\linewidth}
    \centering
    \includegraphics[scale=0.3]{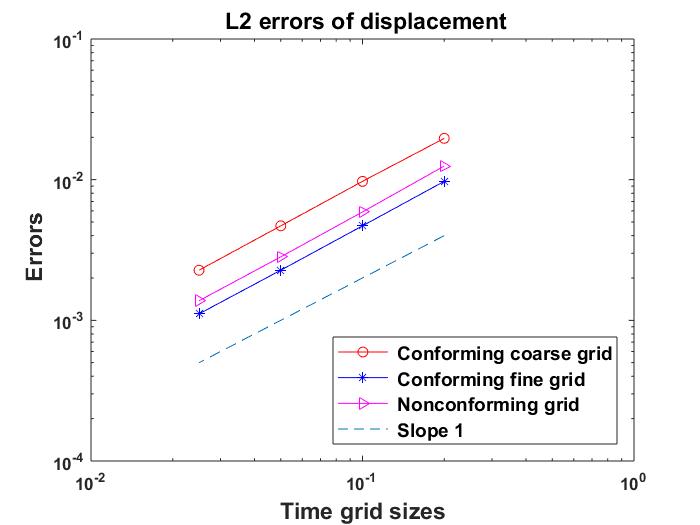} 
    \vspace{1ex}
  \end{minipage} 
    \begin{minipage}[b]{0.5\linewidth}
    \centering
    \includegraphics[scale=0.3]{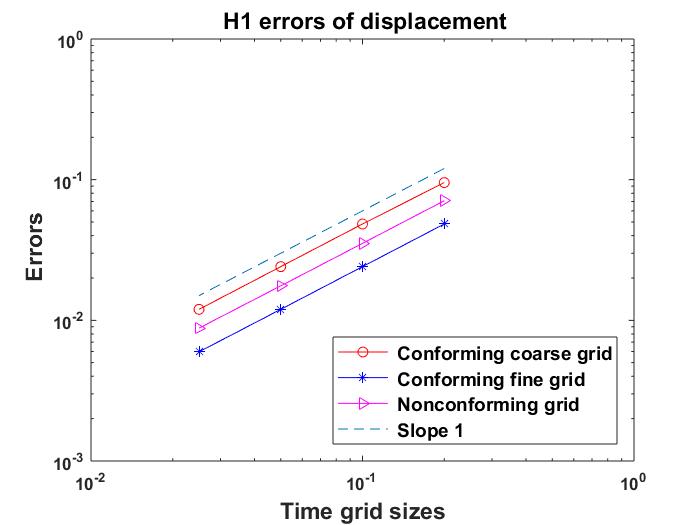} 
    \vspace{1ex}
  \end{minipage} 
  \caption{Errors at $T=0.2$ by the Robin method for $(\alpha_f, \alpha_s) = (1, 100)$ using  Taylor-Hood and \textbf{P}2 elements with $h=1/32$.}
   \label{sptime1-Robin}
\end{figure}

%\begin{figure}[!htbp] 
%    \begin{minipage}[b]{0.5\linewidth}
%   \includegraphics[scale=0.3]{} 
%    \vspace{1ex}
%  \end{minipage}
 % \begin{minipage}[b]{0.5\linewidth}
 %   \centering
 %   \includegraphics[scale=0.3]{} 
 %   \vspace{1ex}
 % \end{minipage} 
 % \begin{minipage}[b]{0.5\linewidth}
%   \centering
  %  \includegraphics[scale=0.3]{} 
 %   \vspace{1ex}
 % \end{minipage} 
 %   \begin{minipage}[b]{0.5\linewidth}
 %   \centering
 %   \includegraphics[scale=0.3]{} 
 %   \vspace{1ex}
 % \end{minipage} 
%  \caption{ Errors at $T=0.8$ by %%\label{SWRtime1}
%\end{figure}

\begin{table}[!htbp] 
  \centering
  \begin{tabular}{c ccccc}
  \toprule
   {\multirow{4}{*}{$\Delta t$}} & \multicolumn{2}{c}{Steklov-Poincar\'{e} Method}& \multicolumn{2}{c}{Robin Method ($\alpha_f=1, \alpha_s=100$)} \\[2ex]
  \cmidrule{2-3}
  \cmidrule{4-5}
    & Conforming & Nonconforming & Conforming & Nonconforming\\[2ex]
    \midrule
    0.2 & 335 & {\multirow{4}{*}{376}} & 336 & {\multirow{4}{*}{338}}  \\[5ex]
    0.1 & 651 & {\multirow{4}{*}{802}} & 657 & {\multirow{4}{*}{676}}\\[5ex]
    0.05 & 1324 & {\multirow{4}{*}{1537}}& 1219 & {\multirow{4}{*}{1290}}\\[5ex]
    0.025 & 2630 & {\multirow{4}{*}{3053}} & 2478 & {\multirow{4}{*}{2556}} \\[5ex]
    0.0125 & 5201 & & 5031 & \\
    \bottomrule
  \end{tabular}
  \caption{Comparison of the computer running times (in seconds) of conforming and nonconforming time grids using Taylor-Hood and \textbf{P}2 elements with $h=1/32$.}
  \label{spcpu1hk}
\end{table}

\begin{table}[!htbp] 
  \centering
  \begin{tabular}{c ccccc}
  \toprule
   {\multirow{4}{*}{$\Delta t$}} & \multicolumn{2}{c}{Steklov-Poincar\'{e} Method}& {\multirow{4}{*}{$\Delta t$}}  & \multicolumn{2}{c}{Robin Method $(\alpha_f=1, \alpha_s=100$)} \\[2ex]
  \cmidrule{2-3}
  \cmidrule{5-6}
    & Conforming & Nonconforming & & Conforming & Nonconforming\\[2ex]
    \midrule
    0.4 & 2224 & {\multirow{4}{*}{2784}} & 0.8 & 5004 & {\multirow{4}{*}{5068}}  \\[5ex]
    0.2 & 4476 & {\multirow{4}{*}{5408}} & 0.4 & 9214 & {\multirow{4}{*}{9271}}\\[5ex]
    0.1 & 9468 & {\multirow{4}{*}{11137}}& 0.2 & 18390 & {\multirow{4}{*}{19673}}\\[5ex]
    0.05 & 18405 & {\multirow{4}{*}{19198}} & 0.1 & 36575 & {\multirow{4}{*}{37390}} \\[5ex]
    0.025 & 38525 & & 0.05 & 69892 & \\
    \bottomrule
  \end{tabular}
  \caption{Comparison of the computer running times (in seconds) of conforming and nonconforming time grids using MINI and \textbf{P}1 elements  with $h=1/64$.}
  \label{spcpu1hk2}
\end{table}

\begin{table}[!htbp] 
  \centering
  \begin{tabular}{ c || c ccccc }
    \hline
    $\alpha_s$ & 1 &3&5&10&50&100\\
    \hline
    Number of iterations & 31 &25&22&20&18&18 \\
    \hline
  \end{tabular}
  \caption{ Number of GMRES iterations for $T=0.2$ using $(\Delta t_f, \Delta t_s) = (0.025, 0.0125)$ and  $\alpha_f=1$.}
  \label{tabiter}
\end{table}

\subsection{Test 2 : Hemodynamic Experiment}
 In this example, we consider the blood flow problem reported in \cite{R14, R10}.  The domain and the boundary conditions used for the computation are depicted in Figure \ref{fig:fig12}. The force $\bb(t)$ applied to the left fluid boundary in Figure \ref{fig:fig12} denotes the stress at the inlet at $t$ seconds and is defined as:
\begin{equation*}
    \bb(t)= \begin{cases}
        \left(-10^3\left(1-\cos{\frac{2\pi t}{0.025}}\right),0\right) &  t \leq 0.025\\
        (0,0)) & 0.025<t<T.
    \end{cases}
\end{equation*}
The parameters used in this example are in accordance with the characteristics of blood flow in the human body.
The density of the fluid, $\rho_f$, is 1 g/$\mbox{cm}^3$ and the viscosity of the fluid, $\nu_f$, is 0.035 g/cm·s. The density of the structure, $\rho_s$, is
1.1 g/$\mbox{cm}^3$. The Young’s Modulus of the structure, $E$, is $3 \times 10^6$ dyne/$\mbox{cm}^2$ and the Poisson ratio, $\nu$, is 0.3. The Lame parameters $\lambda$ and $\nu_s$ are defined as follows:
\begin{equation*}
    \lambda = \frac{\nu E}{(1-2\nu)(1+\nu)} \mbox{ dyne/cm}^2, \quad \nu_s = \frac{E}{2(1+\nu)} \mbox{dyne/cm}^2.
\end{equation*}
Both the fluid and structure have volume forces of $\bf_s = \bf_f = \b0$ dyne/$\mbox{cm}^2$. Due to the closely matched densities between the fluid and the structure, the problem is significantly impacted by the added mass effect. Thus, when using a DD method at each time step, as in most DD approaches for FSI, additional relaxation steps may be necessary for solution stability, in addition to the use of a very fine time grid \cite{R10}.   

We simulate this example using the Steklov-Poincar\'{e} method
without encountering the  stability issue, as our local problems are still time-dependent. 
A uniform mesh is employed to spatially discretize the domains of both the fluid and structure, with $h_x$ and $h_y$ representing the spatial discretization in the $x$ and $y$ directions. For this test, the fluid and structure are approximated using MINI elements and \textbf{P}1 element, respectively. We use a time step of $\Delta t_f = 2 \times 10^{-4}$ for the fluid subdomain and  $\Delta t_s = 1\times 10^{-4}$ for the structure subdomain, with the final time set at $T = 0.1$. By setting $h_x=0.1$ cm and changing $h_y$ between 0.1 cm and $\frac{1}{30}$ cm, we monitor the vertical displacement at three distinct points on the interface (see Figure \ref{fig:my_label}). We observe similar vertical displacement at each point for all values of $h_y$. In \cite{R14} and \cite{R10}, it is observed that the solution heavily depends on spatial discretization, and the vertical displacements in Figure \ref{fig:my_label} are similar to their results obtained by fine spatial discretization (see Figure 9 of \cite{R10}). 
Furthermore, we report the interface velocity errors, 
$ \frac{1}{2}\|\bu - \dot \beeta\|^2_{\Gamma} $ for different mesh sizes of $h_y$  in Table \ref{taberrorinterface} at the final time $T=0.1$ seconds.
\begin{figure}[!htbp]
\begin{center}
\begin{tikzpicture}[scale=0.6]
\draw[thick, -] (0,6)--(0,0)--(11,0);

\draw[thick, -] (0,6)--(11,6);
\draw[thick, -] (0,4.5)--(11,4.5);
% \draw[thick] plot [smooth,tension=0.8] coordinates{(0,5) (3.4,2.5) (7.4,3.3) (11,3)};
\draw[thick, -] (11,0)--(11,6);
\node[left] at (0,2) {$ \bu_{N} = \bb(t) $};
\node[left] at (0,5.2) {$ \beeta_{D} = 0 $};

\node[right] at (11,2) {$ \bu_{N} = 0 $};
\node[right] at (11,5.2) {$ \beeta_{D} = 0 $};

\node[below] at (5.5,0) {$ \bu_{D} = 0 $};
\node[above] at (5.5,6) {$ \beeta_{N} = 0 $};

\node[] at (5.5,5.2) {$ \Omega_s = [0,6] \times [1,1.1] $};
\node[] at (5.5,2) {$ \Omega_f = [0,6] \times [0,1] $};

\draw[thick, ->] (6.5,3.5)--(5.6,4.5);
\node[] at (6.8,3.5) {$ \Gamma $};

\end{tikzpicture}
\caption{Domain and boundary conditions for Test2}
\label{fig:fig12}
\end{center} \vspace{-0.4cm}
\end{figure}
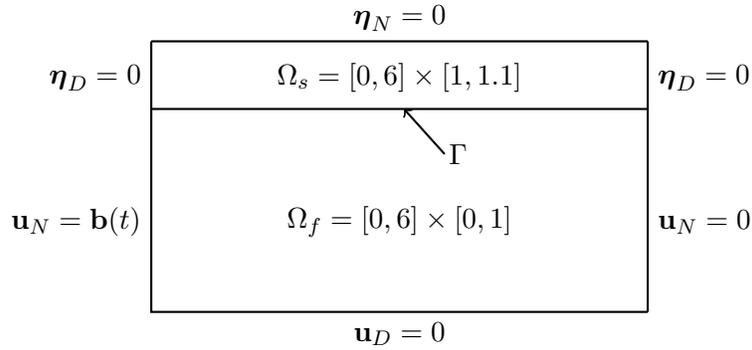

\begin{figure}[!htbp]
   \centering
  \scalebox{.32}{\includegraphics{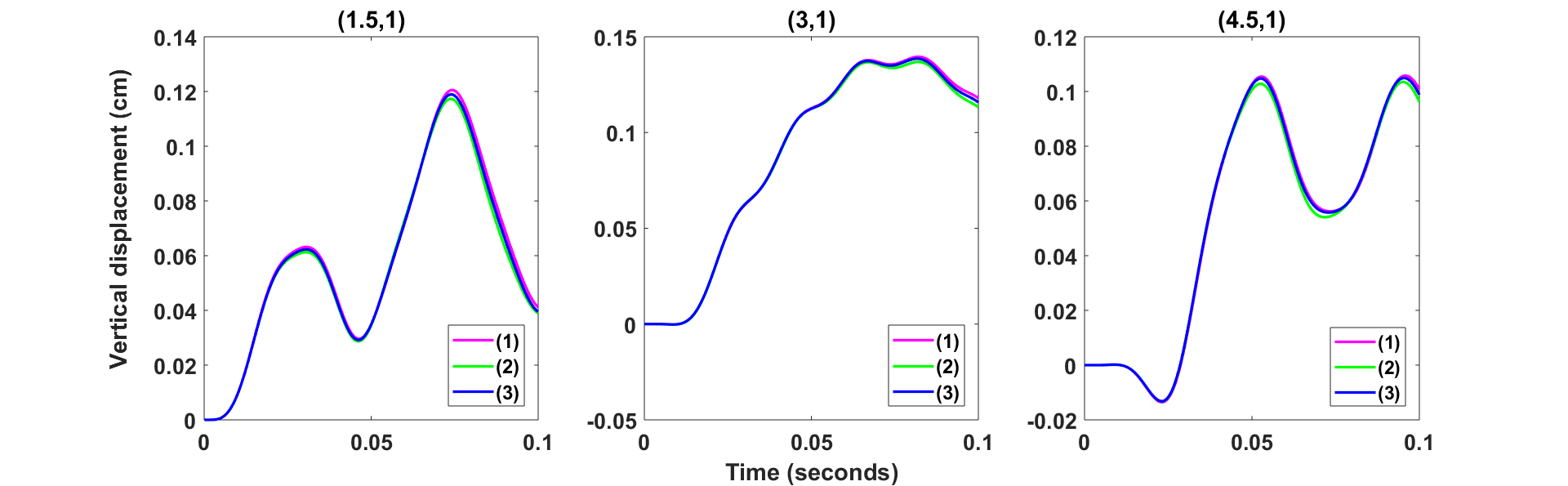}}
    \caption{Vertical displacement at three points on the interface with: (1) $h_x = 0.1$ cm, $h_y = 0.1$ cm, (2) $h_x = 0.1$ cm, $h_y = 0.05$ cm,
and (3) $h_x = 0.1$ cm, $h_y = \frac{1}{30}$ cm}
    \label{fig:my_label}
\end{figure}

\begin{table}[!htbp] 
  \centering
  \begin{tabular}{ c c}
    \toprule
    Values of $h_y$ & Interface Velocity Error \\
    \midrule
    0.1 & 2.05e-04\\[1.5ex]
    0.05 & 8.23e-05\\[1.5ex]
    $\displaystyle \frac{1}{30}$ & 2.87e-05\\[1.5ex]
    \bottomrule
  \end{tabular}
  \caption{Errors in the continuity of velocity between subsystems for $h_x=0.1$cm.}
  \label{taberrorinterface}
\end{table}

\section{Conclusion} \label{sec:conclusion}
 We have introduced global-in-time domain decomposition methods by formulating two interface problems, based on the Steklov-Poincar\'{e} operator and Robin transmission conditions, respectively. In these methods, the fluid and the structure subproblems are time-dependent and solved independently using local solvers. This allows for the use of nonconforming time grids and different time-stepping algorithms for local problems. The SWR algorithm for the second interface problem was introduced and analyzed in continuous and semi-discrete settings. In this paper, we emphasize the time discretization aspects, providing an in-depth analysis of the nonconforming time discretization method utilized in our numerical experiments. While a detailed exploration of spatial discretization techniques and stability analysis would further enhance our study, we propose it as a direction for future research. We performed numerical tests on two examples, including a non-physical problem where we tested with various mesh sizes and time steps to verify convergence rates. The use of non-conforming time grids resulted in better accuracy within similar computational times compared to results obtained using conforming coarse grids.
 For the physical benchmark problem, we implemented the Steklov-Poincaré method and observed similar results reported in the literature.
Local time stepping makes these algorithms efficiently applicable to multiphysics problems, where local problems may have different time scales. 
In our ongoing research, we are extending these algorithms to tackle more complicated multiphysics problems, such as the Stokes-Biot system.
\break


\begin{thebibliography}{50}
\bibitem{miniexp} D.N. Arnold, F. Brezzi, M. Fortin, A stable finite element for the Stokes equations, Calcolo 21 (1984) 337–344.

\bibitem{bfpt} F.P.T. Baaijens, A fictitious domain/mortar element method for fluid-structure interaction, Int. J. Numer. Methods Fluids 35 (2001) 743–61.

\bibitem{R11} S. Badia, A. Quaini, and A. Quarteroni, Splitting methods based on algebraic factorization for fluid-structure interaction, SIAM Journal on Scientific Computing 30 (2008) 1778–1805.

\bibitem{BRM17}F. Ballarin, G. Rozza, and Y. Maday, Reduced-order semi-implicit schemes for fluid-structure interaction problems. in Model Reduction of Parameterized Systems, P. Benner and M. Ohlberger and A. Patera and G. Rozza and K. Urban, Springer, (2017) 149-167. 

\bibitem{R1} Y. Bazilevs, V. M. Calo, T.J.R. Hughes, and Y. Zhang, Isogeometric fluid-structure interaction: theory, algorithms, and computations, Computational Mechanics 43 (2008) 3–37.

\bibitem{bdg} D. Boffi and L. Gastaldi, A fictitious domain approach with Lagrange multiplier for fluid-structure interactions, Numer. Math. 135 (2017) 711–732.

\bibitem{R2} J. Boujot, Mathematical formulation of fluid–structure interaction problems, Modelisation Math´ematique et Analyse Numerique 21 (1987) 239–260.


\bibitem {B} E. Burman, R. Durst, M,A. Fernández, and J. Guzm\'{a}n, Fully discrete loosely coupled Robin-Robin scheme for incompressible fluid–structure interaction: stability and error analysis, Numer. Math. 151 (2022) 807–840. 


\bibitem{A} E. Burman, R. Durst, and J. Guzm\'{a}n, Stability and error analysis of a splitting method using Robin–Robin coupling applied to a fluid-structure interaction problem, Numer. Methods Partial Differ. Eq. 38 (2022) 1396–1406.

\bibitem{BF09} E. Burman and M.A. Fern{\'a}ndez, Stabilization of explicit coupling in fluid-structure interaction involving fluid incompressibility, Computer Methods in Applied Mechanics and Engineering 198 (2009) 766-784.

\bibitem{R3} S. Canic, A. Mikelic, and J. Tambaca, A two-dimensional effective model describing fluid–structure interaction in blood flow: analysis, simulation and experimental validation, Comptes Rendus M\'{e}canique 333 (2005) 867–883.

\bibitem{R4} J. Chrispell and L. Fauci, Peristaltic pumping of solid particles immersed in a viscoelastic fluid, Mathematical Modelling of Natural Phenomena 6 (2011) 67–83.

\bibitem{Fer11} M. Fern{\'a}ndez, Coupling schemes for incompressible fluid-structure interaction: implicit, semi-implicit and explicit, 
SeMA Journal: Boletin de la Sociedad Espa\~{n}ola de Matem{'a}tica Aplicada 55 (2011) 59-108.

\bibitem{FGG05} M. Fern{\'a}ndez, J. Gerbeau, and C. Grandmont, A projection semi-implicit scheme for the coupling of an elastic structure with an incompressible fluid, INRIA, 2005 Technical Report RR-5700. 

\bibitem{gavin} K.J. Gavin, New subgrid artificial viscosity Galerkin methods for the Navier-Stokes equations, Comput. Methods Appl. Mech. Engrg. 200 (2011) 242-250.

\bibitem{phlee} P. G. Geredeli, H. Kunwar, and H. Lee, Partitioning method for the finite element approximation of a 3D fluid-2D plate interaction system, Numer. MethodsPartial Differ. Eq. (2024), e23132. 

\bibitem{gig} G. Gigante and C. Vergara, On the Choice of Interface Parameters in Robin–Robin Loosely Coupled Schemes for Fluid–Structure
Interaction, Fluids 6 (2021) 213.


\bibitem{GiraultRaviart} V. Girault and P.A. Raviart, Finite element methods for Navier-Stokes equations, Springer-Verlag (1986).


\bibitem{R5} N. Haritos, Introduction to the analysis and design of offshore structures – an overview, Electronic Journal of Structural Engineering 7 (2007) 55–65.

\bibitem{ff} F. Hecht, New development in freefem++, J. Numer. Math. 20 (2012) 251–265.

\bibitem{R12} M. Heil, An efficient solver for the fully coupled solution of large-displacement fluid–structure interaction problems, Comput. Methods Appl. Mech. Eng. 193 (2004) 1–23.

\bibitem{hab} C. Hesch, A.J. Gil, A. Carreno, J. Bonet, and P. Betsch, A mortar approach for Fluid-Structure interaction problems: Immersed strategies for deformable and rigid bodies, Comput. Methods Appl. Mech. Eng. 278 (2014) 853–82.

\bibitem{R6} J. J. Heys, T. A. Manteuffel, S. F. McCormick, and J.W. Ruge, First order system least squares (FOSLS) for coupled fluid–elastic problems, Journal of Computational Physics 195 (2004) 560–575.



\bibitem{HJJ13}
 T.T.P. Hoang, J. Jaffr\'e, C. Japhet, M. Kern, and J.E. Roberts, Space-time domain decomposition methods for diffusion problems in mixed formulations, SIAM J. Numer. Anal. 51 (2013) 3532-3559. 

\bibitem{HJK16}
T.T.P. Hoang, C. Japhet, M. Kern, and J.E. Roberts, Space-time domain decomposition for reduced fracture models in mixed formulation, SIAM J. Numer. Anal. 54 (2016) 288-316.

\bibitem{THH} T.T.P. Hoang, H. Kunwar, and H. Lee, Nonconforming time discretization based on Robin transmission conditions for the Stokes–Darcy system, Applied Mathematics and Computation. 413 (2022) 126602.

\bibitem{THe} T.T.P. Hoang and H. Lee, A Global-in-time Domain Decomposition Method for the Coupled Nonlinear Stokes and Darcy Flows, J. Sci. Comput. 87 (22)  (2021).

\bibitem{R13} J. Hron and S. Turek, A monolithic FEM/multigrid solver for an ALE formulation of fluid-structure interaction with applications in biomechanics, in Fluid-Structure Interaction, Springer (2006) 146-170.

\bibitem{Yotov23} M. Jayadharan, M. Kern, M. Vohralík, and I. Yotov, A space-time multiscale mortar mixed finite element method for parabolic equations, SIAM Journal on Numerical Analysis 61 (2023) 675–706.

\bibitem{R7} R. Kamakoti and W. Shyy, Fluid–structure interaction for aeroelastic applications, Progress in Aerospace Sciences 40 (2004) 535–558.

\bibitem{R14} P. Kuberry and H. Lee, A decoupling algorithm for fluid–structure interaction problems based on optimization, Comput. Methods Appl. Mech. Engrg. 267 (2013) 594-605.  

\bibitem{Klsee} H. Kunwar, H. Lee, K. Seelman, Second-order time discretization for a coupled quasi-Newtonian fluid-poroelastic system. Int J Numer Meth Fluids. 92 (2020) 687–702.

\bibitem{Lion} Pierre-Louis Lions, On the Schwarz Alternating Method III: A Variant for Nonoverlapping Subdomains, in Domain Decomposition Methods for Partial Differential Equations, T.F. Chan, R. Glowinski, J. Périaux, O.B. Widlund (Eds.) 1989.

\bibitem{vad} R.V. Loon, P.D. Anderson, J. de Hart, and F.P.T Baaijens, A combined fictitious domain/adaptive meshing method for fluid-structure interaction in heart valves, Int J. Numer. Methods Fluids 46 (2004) 533–44.

\bibitem{Martin} V. Martin, An optimized Schwarz waveform relaxation method for the unsteady convection diffusion equation in two dimensions. Appl. Numer. Math. 52 (2005) 401–428.

\bibitem{mtg} M. Mayr, T. Klöppel, W.A. Wall, and M.W. Gee, A temporal consistent monolithic approach to fluid–structure interaction enabling single field predictors, SIAM J. Sci. Comput. 37 (2015) 30–59.

\bibitem{mng} M. Mayr, M.H. Noll, and M.W. Gee, A hybrid interface preconditioner for monolithic fluid-structure interaction solvers, Adv. Model Simul. Eng. Sci. 7 (2020) 1-33.

\bibitem{R10} C.M. Murea and  S. Sy, A fast method for solving fluid-structure interaction problems numerically, International Journal for Numerical Methods in Fluids 60 (2009) 1149–1172.

\bibitem{R9} F. Nobile and C. Vergara, An effective fluid-structure interaction formulation for vascular dynamics by generalized Robin conditions, SIAM Journal on Scientific Computing 30 (2008) 731–763.

\bibitem{NBR21} M. Nonino, F. Ballarin, and G. Rozza, A Monolithic and a partitioned, reduced basis method for fluid–structure interaction problems, Fluids 6 (2021) 229-263.

\bibitem{JMP} J.P. Sheldon, S.T. Miller, and J.S. Pitt, A hybridizable discontinuous Galerkin method for modeling fluid–structure interaction, J. Comput. Phys. 326 (2016) 91–114.

\bibitem{R8} P. Le Tallec and S. Mani, Numerical analysis of a linearised fluid-structure interaction problem, Numerische Mathematik 87 (2000) 317–354.


%\bibitem{AMI} \textcolor{blue}{A. de Castro, H. Lee, and M. M. Wiecek. Formulation and analysis of a Schur complementmethod for fluid-structure interaction. arXiv:2304.15001v1, 2023.}

\end{thebibliography}
\end{document}